\title{On the reconstruction of planar lattice-convex sets\\ from the covariogram}
\author{Gennadiy Averkov\footnote{Institute for Mathematical Optimization, Faculty of Mathematics, Otto-von-Guericke-Universit\"at Magdeburg, Universit\"atsplatz 2, 39106 Magdeburg, Germany, e-mail: averkov@math.uni-magdeburg.de} \, and \, Barbara Langfeld\footnote{Mathematisches Seminar, Christian-Albrechts-Universit\"at zu Kiel, 24098 Kiel, Germany, e-mail: langfeld@math.uni-kiel.de}}
\newcommand{\tikzhide}[1]{#1} \newcommand{\epshide}[1]{}
\newcommand{\pdfhide}[1]{}
\renewcommand{\tikzhide}[1]{}\renewcommand{\pdfhide}[1]{#1}
\DeclareMathOperator{\aff}{aff}
\DeclareMathOperator{\bd}{bd}
\DeclareMathOperator{\intr}{int}
\DeclareMathOperator{\cone}{cone}
\DeclareMathOperator{\conv}{conv}
\DeclareMathOperator{\supp}{supp}
\DeclareMathOperator{\vol}{vol}
\newcommand{\bF}{\mathbb{F}}
\newcommand{\compl}{\mathbb{C}}
\newcommand{\bL}{\mathbb{L}}
\newcommand{\card}[1]{\left|#1\right|}
\newcommand{\cT}{\mathcal{T}}
\newcommand{\dotvar}{\,\cdot\,}
\newcommand{\integer}{\mathbb{Z}}
\newcommand{\KK}{\mathcal{K}}
\newcommand{\LCONV}[1]{\KK(\integer^{#1})}
\newcommand{\natur}{\mathbb{N}}
\newcommand{\real}{\mathbb{R}}
\newcommand{\setcond}[2]{\left\{#1\,:\,#2\right\}}
\newcommand{\sprod}[2]{\left<#1,#2\right>}
\newcommand{\term}[1]{\emph{#1}}
\definecolor{update}{HTML}{FFFF00}
\definecolor{oldversion}{HTML}{990099}
\definecolor{commentB}{HTML}{7A91FF} 
\definecolor{commentG}{HTML}{A57AFF} 
\newtheorem{nn}{}[section]
\newtheorem{corollary}[nn]{Corollary}
\newtheorem{remark}[nn]{Remark}
\newtheorem{lemma}[nn]{Lemma}
\newtheorem*{lemma*}{Lemma}
\newtheorem{theorem}[nn]{Theorem}
\newtheorem{question}[nn]{Question}
\newtheorem{problem}[nn]{Problem}
\begin{document}

\maketitle

\begin{abstract}
 A finite subset $K$ of $\integer^d$ is said to be lattice-convex if $K$ is the intersection of $\integer^d$ with a convex set. The covariogram $g_K$ of $K\subseteq \integer^d$ is the function associating to each $u \in \integer^d$ the cardinality of $K\cap (K+u)$. Daurat, G\'erard, and Nivat and independently Gardner, Gronchi, and Zong raised the problem of the reconstruction of  lattice-convex sets $K$ from $g_K$. We provide a partial positive answer to this problem by showing that for $d=2$ and under mild extra assumptions, $g_K$ determines $K$ up to translations and reflections. As a complement to the theorem on reconstruction we also extend the known counterexamples (i.e., planar lattice-convex sets which are not reconstructible, up to translations and reflections) to an infinite family of counterexamples.
\end{abstract}

\newtheoremstyle{itsemicolon}{}{}{\mdseries\rmfamily}{}{\itshape}{:}{ }{}
\newtheoremstyle{itdot}{}{}{\mdseries\rmfamily}{}{\itshape}{.}{ }{}
\theoremstyle{itdot}

\newtheorem*{msc*}{2010 Mathematics Subject Classification} 
\begin{msc*}
  Primary: 52C05, 05B10; Secondary: 16S34.
\end{msc*}

\newtheorem*{keywords*}{Key words and phrases}
\begin{keywords*}
Covariogram, crystallography, diffraction, direct sum, homometric sets, lattice-convex set, Matheron's problem, partial digest problem, quasicrystal, tomography, X-ray.
\end{keywords*}

\section{Introduction}

In the research area of \term{(geometric) tomography} one is usually concerned with the reconstruction of an object from information on the projections and/or sections of that object. Discrete tomography is concerned with the reconstruction of discrete objects and relies on discrete tools (see \cite{MR1722457}, \cite{HerK07}, \cite{MR2160045}) while analytic tomography is concerned with the reconstruction of objects determined by analytic data (see \cite{Gardner-book-2006}). Both research directions are naturally unified within the realm of convex geometry, since convex sets can be of both discrete and analytic nature. Geometric tomography has numerous applications outside mathematics. E.g., one of the many links between tomography and physics is the so-called \term{phase retrieval problem}, that is, the problem of retrieving an unknown object from its \term{diffraction} data (see \cite{Klibanov-Sachs-Tikhonravov-1995}). The diffraction data can be described in mathematical language as an \term{autocorrelation} or a \term{covariogram}.

In this paper we present results on a discrete version of the covariogram problem under convexity assumptions. For presenting the problem we first start with the (standard) covariogram problem in the continuous case. Let $d\in\natur$. By $\vol$ we denote the $d$-dimensional volume. Let $K\subseteq\real^d$ be a \emph{body}, i.e., a bounded set coinciding with the closure of its interior. The (continuous) covariogram of $K$ is the function $g_K:\real^d\to\real$ that sends $u$ to $\vol(K \cap (K+u))$. The covariogram problem is the problem of determining $K$ from $g_K$ within a given class of bodies. Clearly, $g_K$ remains unchanged if we translate $K$ or reflect $K$ with respect to a point (in what follows, the term `reflection' will always mean a point reflection). Thus, the determination of $K$ up to translations and reflections is considered the unique determination. It is not hard to show that $g_K$ is the autocorrelation of the characteristic function of $K$. By this, the covariogram problem is a special case of the phase retrieval problem. Recently there has been much progress on the retrieval of $K$ from $g_K$ within the class of convex bodies (this version of the covariogram problem is usually called \term{Matheron's problem}); see \cite{MR1232748}, \cite{Bianchi02}, \cite{AveBia07}, \cite{Averkov-Bianchi-2009}, \cite{Bianchi-cross-cov-2009}, \cite{Bianchi-tom-cones-2009}, \cite{Bianchi-Gardner-Kiderlen-2011}.

If $K$ is a finite subset of $\integer^d$ we define the (discrete) covariogram of $K$ to be $g_K:\integer^d\to\integer$, $u\mapsto \card{K \cap (K+u)}$; here $\card{\dotvar}$ stands for the cardinality. The problem of retrieving $K$ from $g_K$ is called the discrete covariogram problem. The discrete covariogram problem (for $d=1$ also known as the \term{partial digest problem}) was studied by many authors; see for example \cite{Lemke-Skiena-Smith-2003}, \cite{RoSey82}, \cite{Averkov-detecting-cen-sym-2009}. It turns out that the discrete covariogram problem is a special case of the continuous covariogram problem. In fact, for $K \subseteq \integer^d$, $g_K$ determines $g_{K+[0,1]^d}$ and vice versa. 

A subset $K$ of $\integer^d$ is said to be \emph{lattice-convex} if $K$ is the intersection of $\integer^d$ with a convex set. By $\LCONV{d}$ we denote the class of finite, lattice-convex subsets of $\integer^d$ which affinely span $\real^d$. To the best of our knowledge, the problem of  determination of $K$ from $g_K$ within $\LCONV{d}$ was first addressed in \cite{Daurat-Gerard-Nivat-2005} and \cite{MR2160045}. Both papers also give counterexamples to the uniqueness of reconstruction. In our paper we contribute to the study of this problem. In view of the previous observations, on the one hand, this problem is a special case of the continuous covariogram problem and, on the other hand, a discrete version of Matheron's problem. We emphasize that recently the covariogram problem for lattice-convex sets attracted attention of researchers working in \term{X-ray crystallography} and the theory of \term{quasicrystals}; see~\cite{BaakeGrimm}. (We refer to \cite{quasicrystals-primer} and \cite{Moo00} for further information on the latter topics.) The discrete covariogram problem is also related to the \term{discrete X-ray problem} (i.e., the problem of reconstruction from discrete X-ray pictures). In \cite[Theorem~4.1]{MR2160045}  it was proved that sets $K, L \in \LCONV{2}$ have equal covariograms if and only if for every direction the discrete X-ray pictures of $K$ and $L$ are (so-called) rearrangements of each other. Strong positive results on the discrete X-ray problem for the class $\LCONV{2}$ were obtained in \cite{MR1376547}.

In this paper we address the covariogram problem for  $\LCONV{2}$ and provide both positive and negative results. One can see that for $K \in \LCONV{2}$ the directions of the (discrete) edges of $K$ are determined by $g_K$. Our positive result (Theorem~\ref{thm:main-thm}) implies that, for a given collection of prescribed edge directions,  a set $K \in \LCONV{2}$ is determined by $g_K$ if all edges of $K$ have sufficiently large cardinality and the difference between cardinalities of parallel edges is either zero or sufficiently large. The proof of the positive result suggests that, under the given sufficient conditions, the retrieval of $K$ from $g_K$ is analogous to the retrieval of polygons from the continuous covariogram (the problem solved in \cite{MR1232748}, see also \cite{Bianchi02} for a short solution). Thus, the discrete nature of the problem is only revealed when the above sufficient conditions are violated. As examples in \cite[Section~4]{Daurat-Gerard-Nivat-2005} and \cite[p.~402]{MR2160045} show, $K \in \LCONV{2}$ is not always determined by $g_K$. Therefore, as a complement to the positive result, we also study those $K \in \LCONV{2}$ which are not determined by $g_K$. All known $K \in \LCONV{2}$ not determined by $g_K$ can be represented as a direct sum $K = S \oplus T$, where $S \subseteq \integer^2$ and $T \in \LCONV{2}$ are not centrally symmetric and $T$ can be covered by two distinct parallel lines. Thus, we are concerned with the characterization of all $K$ having the above form. It turns out that the class of all such $K$ is quite narrow (the convex hull of $S$ is a certain polygon with at most six edges and $T$ is also of a certain particular form); this is our second main result which is contained in Theorem~\ref{thm:homometry:from:lat:wid:one}. The complete classification of all $K \in \LCONV{2}$ determined by $g_K$ remains an open question.

The paper is organized as follows. In~Section~\ref{sec:main-results} we introduce necessary notation, present the main results (Theorem~\ref{thm:main-thm} and Theorem~\ref{thm:homometry:from:lat:wid:one}), and state consequences and open questions. Theorem~\ref{thm:main-thm} and Theorem~\ref{thm:homometry:from:lat:wid:one} are proved in Sections~\ref{sec:mainproof} and \ref{sec:HomometricPairs}, respectively.

\section{Results and open questions}\label{sec:main-results}

For formulating our main results we need further notations and terminology. The origin in $\real^d$ will be denoted by $o$. If $u \in \integer^d \setminus \{o\}$, then $\gcd(u)$ stands for the greatest common divisor of the components of $u$. For $u_1,u_2 \in \integer^2$ we denote by $\det(u_1,u_2)$ the determinant of the matrix with columns $u_1$ and $u_2$ (in this order). The interior, boundary, affine hull, and conical hull operations will be denoted by $\intr$, $\bd$, $\aff$, and $\cone$, respectively. The convex hull of $K\subseteq \real^d$ will be denoted by $\overline{K}$.\footnote{We decided to use the notation $\overline K$ instead of the standard notation $\conv K$, because it allows us to write formulas in a more compact way.} The closure of the set $\setcond{u\in\real^d}{g_K(u)>0}$ is called the  \term{support} of $g_K$ and is denoted by $\supp g_K$. As usual, for $S,T\subseteq \real^d$ the \term{(Minkowski) sum} $S+T$ is defined to be $\setcond{s+t}{s \in S \,\land\, t \in T}$; for $t \in \real^d$ we also write $S+t$ instead of $S+\{t\}$. Further, we put $-T:=\setcond{-t}{t\in T}$. The notations $t-S$ and $S-t$ are introduced in a similar fashion. For $R \subseteq \real$ and $S \subseteq \real^d$ we use the notation $R S = \setcond{r s}{r \in R \,\land\, s \in S}$.

For $K\subseteq\real^d$ we put $[K]:=\setcond{K+t}{t\in\real^d}\cup \setcond{-K+t}{t\in\real^d}$, which is the set of translations and reflections of $K$. A pair $(K,L)$ of subsets of $\integer^d$ is said to be \term{homometric} if $g_K=g_L$ and \term{nontrivially homometric} if additionally $[K] \ne [L]$.

Let us introduce the notion of lattice-convexity with respect to an arbitrary lattice. A subset $K$ of a lattice $\bL \subseteq \real^d$ is said to be \term{lattice-convex} (with respect to $\bL$) if $K$ is the intersection of $\bL$ and a convex set. If $\bL$ is clear from the context, we will drop the qualifier `with respect to $\bL$'. By $\KK(\bL)$ we denote the class of finite, lattice-convex subsets $K$ of $\bL$ with $\aff K= \real^d$.

We use a few standard notations and definitions of convex geometry (in some of the cases, carried over to the case of lattice-convex sets); for details see \cite{MR1216521}. If $K$ is a subset of $\real^d$, the \term{support function} $h(K,\dotvar) : \real^d \rightarrow \real \cup \{\infty\}$ of $K$ is defined by $h(K,u) := \sup \setcond{\sprod{x}{u}}{x \in K}$, where $\sprod{\dotvar}{\dotvar}$ is the standard scalar product in $\real^d$. If $K \subseteq \real^d$ is compact, then the \term{support set} of $K$ in direction $u \in \real^d$ is defined by \[F(K,u):=\setcond{x \in K}{\sprod{x}{u} = h(K,u)}.\] Clearly, if $K \in \LCONV{d}$ and $u \in \real^d$, then $F(K,u)$ is lattice-convex. For support sets we have
\begin{equation} \label{eq:sum-of-faces}
F(K+L,u) = F(K,u) + F(L,u)
\end{equation}
for bounded compact sets $K, L \subseteq \real^d$ and each $u \in\real^d$.

Some terminology from classical convexity theory can be directly carried over to the setting of lattice-convex sets. If $K \in \LCONV{2}$, then a support set $F(K,u)$ with $u \in \real^2 \setminus \{o\}$ and that contains exactly one element will be called a \term{vertex} of $K$; a support set $F(K,u)$ with $u \in \real^2\setminus\{o\}$ and with more than one element will be called an \term{edge} of $K$ with \term{outer normal} $u$.
We introduce
\[
	U(K) = \setcond{u \in \integer^2 \setminus \{o\}}{u \ \text{is an outer normal to an edge of $K$ and $\gcd(u)=1$}}.
\]
	Vectors $u \in \integer^2 \setminus \{o\}$ with $\gcd(u) =1$ are called \term{primitive}.

To measure the number of lattice points on the edges and the difference of parallel edges of $K\in\LCONV{2}$ we introduce
\begin{align*}
 m'(K)&:=\min\setcond{|F(K,u)|}{u\in U(K)},\\
 m''(K)&:=\min\setcond{|F(K,u)|-|F(K,-u)|+1}{u\in \integer^2\setminus\{o\}\,\land\,|F(K,u)|>|F(K,-u)|>1},\\
 m(K)&:=\min \{m'(K),m''(K)\},
\end{align*}
where as usual $\min\emptyset=\infty$. Observe that $m''(K)=\infty$ if and only if $K$ is centrally symmetric, and that $m(K)=m'(K)$ if $K$ has no parallel edges of different lengths.

If $U$ is a finite set of vectors in $\real^2$ which linearly spans $\real^2$, then the \term{discrepancy} $\delta(U)$ of $U$ is
\[\delta(U):=\frac{\max D(U)}{\min D(U)},\]
where $ D(U):= \setcond{|\det(u_1,u_2)|}{u_1, u_2 \in U} \setminus \{0\}$. For $K\in\LCONV{2}$ we define $\delta(K):=\delta(U(K))$. The geometric meaning of the discrepancy of a system of vectors can be easily visualized in terms of cardinalities of lattice-convex sets. For two primitive vectors $u, v\in\integer^2$ the number $|\det(u,v)|$ is equal to the number of lattice points in the half-open parallelogram $[0,1)u+[0,1)v$; this can be derived, e.g., from \term{Pick's formula}. In this sense, for primitive $u$ and $v$, $|\det(u,v)|$ is a measure for `how densely' $\cone \{u,v\}$ is filled with lattice points, relatively to `how densely' the border of this cone is filled. The discrepancy $\delta(U)$ then can be understood as a number measuring the `unbalancedness' of this kind of density of the cones that can be formed by pairs of nonparallel vectors from $U$.

Now we are ready to formulate our first main result.

\begin{theorem} \label{thm:main-thm} 
 Let $K, L \in \LCONV{2}$ be sets with $g_K=g_L$.
 Then $m'(K)$, $m''(K)$, $m(K)$, $U(K)\cup U(-K)$ and $\delta(K)$ are determined by $g_K$. Moreover, if $m(K) \ge \delta(K)^2+\delta(K)+1$, then $[K]=[L]$; hence $g_K$ determines $K$ within the class $\LCONV{2}$ up to translations and reflections.
\end{theorem}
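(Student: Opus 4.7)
The plan is to split the argument into a ``soft'' part that extracts $m'(K)$, $m''(K)$, $m(K)$, $U(K)\cup U(-K)$, and $\delta(K)$ directly from $g_K$, and a ``hard'' reconstruction part under the discrepancy hypothesis, modeled on Bianchi's approach \cite{Bianchi02} to the continuous polygon case.

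For the soft part, $\{u\in\integer^2:g_K(u)>0\}=K-K$, so the difference body $DK:=\overline K-\overline K$ is the convex hull of $\supp g_K$ and its primitive outer normals are precisely $U(K)\cup U(-K)$; since $|\det(u,v)|$ is invariant under sign changes of either argument, $\delta(K)=\delta(U(K)\cup U(-K))$ is also determined. Fix a primitive $u\in U(K)\cup U(-K)$, set $n:=|F(K,u)|$ and $n':=|F(K,-u)|$, and apply \eqref{eq:sum-of-faces} to $\overline K$ and $-\overline K$ to get $F(DK,u)=F(K,u)-F(K,-u)$. Parametrizing along the primitive edge direction, a direct cardinality count shows that $g_K$ on the $n+n'-1$ lattice points of $F(DK,u)$ is a piecewise-linear tent with peak $\min(n,n')$ and plateau length $|n-n'|+1$, from which the multi-set $\{n,n'\}$ is recovered. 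Aggregating over $u$ yields $m'(K)$, $m''(K)$, and $m(K)$.

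For the hard part, suppose $g_K=g_L$. The soft part yields $DK=DL$, $U(L)\cup U(-L)=U(K)\cup U(-K)$, and $\{|F(L,u)|,|F(L,-u)|\}=\{|F(K,u)|,|F(K,-u)|\}$ for every primitive $u$. For each pair $\{u,-u\}$ with unequal edge-cardinalities, let $\sigma_u\in\{\pm 1\}$ equal $+1$ if $L$ uses the same assignment as $K$ and $-1$ if it swaps them; replacing $L$ by $-L$ flips every $\sigma_u$ simultaneously, so after this global normalization $[L]=[K]$ is equivalent to $\sigma_u\equiv +1$, and it remains to forbid mixed patterns. To that end I would exploit $g_K$ on the lattice layer one primitive step inside each edge of $DK$: in a lattice basis $(e,f)$ of $\integer^2$ with $e$ along $F(DK,u)$ and $\sprod{f}{u}=1$, the row decomposition of $K$ in direction $u$ expresses these second-layer values as sums of overlap cardinalities between the extreme rows of $K$ (already determined up to $\sigma_u$ by the tent data) and their immediate neighbor rows, reconstructing the latter up to the same swap. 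At a vertex of $\overline K$ where two primitive edge normals $u_1,u_2\in U(K)$ meet, the neighbor-row data obtained from $u_1$ and from $u_2$ must agree on a lattice trapezoid adjacent to the vertex, and a mismatched pair $\sigma_{u_1}\ne\sigma_{u_2}$ would then force a lattice point of the symmetric difference $K\triangle L$ into that trapezoid, contradicting $g_K=g_L$.

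The main obstacle is the quantitative step producing the threshold $m(K)\ge \delta(K)^2+\delta(K)+1$. The trapezoid cardinality is governed by $|\det(u_1,u_2)|$ and thus bounded below by $\min D(U(K))$, while the ``shift'' in row lengths caused by a single swap scales with $|\det|/\min D(U(K))\le\delta(K)$; stacking two such shifts at adjacent directions introduces a factor $\delta^2$, and the residual ``$+\delta+1$'' is a Pick-type correction for lattice points on the boundary of the trapezoid. Making this informal count precise---by writing the second-layer values of $g_K$ as cardinalities of explicit lattice-convex regions and carefully tracking how sign swaps perturb them---will be the technically demanding core of the argument.
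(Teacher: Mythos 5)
Your ``soft'' part is correct and coincides with the paper's Lemma~\ref{lem:edges-up-to-flips}: $\supp g_K$ gives $DK$, and on the line $\aff F(DK,u)\cap\integer^2$ the covariogram equals the cross-correlation $|F(K,u)\cap(F(K,-u)+w)|$, a discrete trapezoid from which the multiset $\{|F(K,u)|,|F(K,-u)|\}$ is read off; this yields $m'(K)$, $m''(K)$, $m(K)$, $U(K)\cup U(-K)$ and $\delta(K)$. The reformulation of uniqueness as ruling out mixed sign patterns $(\sigma_u)$ is also legitimate.

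The ``hard'' part has genuine gaps. First, the deconvolution step is unjustified: for $w$ one layer inside $F(DK,u)$ one gets $g_K(w)=|F(K,u)\cap(K_2'+w)|+|K_2\cap(F(K,-u)+w)|$, where $K_2,K_2'$ are the rows adjacent to the two extreme rows, i.e.\ a \emph{sum} of two trapezoids; that this sum splits uniquely into its summands, let alone ``up to the same swap $\sigma_u$'', is exactly the kind of claim that fails for general homometric sets and is nowhere proved. Second, and more fundamentally, a mismatch $\sigma_{u_1}\ne\sigma_{u_2}$ at a vertex $v$ of $\overline{K}$ is invisible in data localized near $v$ or near the corresponding vertex of $DK$: the two candidate boundaries have the same normal cone and the same edge germs at $v$ and differ only at the \emph{far} endpoint of one of the edges. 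This is why the paper compares values such as $g_K(v-p+kw_1)$ and $g_K(v-p+kw_1+w)$ with $p$ on the \emph{opposite} edge (Lemmas~\ref{lem:all-cones} and~\ref{lem:Bianchi-main}), and why it needs an integer $k$ with $\delta\le k\le(m-1)/\delta$ (Lemma~\ref{lem:k-exists}) --- this, not a ``Pick-type correction'', is the source of the threshold $\delta^2+\delta+1$: it guarantees that the triangles $\overline{K}\cap(\overline{K}+v-p+kw_1)$ used in the comparison actually lie inside the sets. Third, your closing inference --- a point of the symmetric difference of $K$ and $L$ in some trapezoid ``contradicting $g_K=g_L$'' --- is a non sequitur, since homometric sets need not coincide; one must exhibit an explicit $w$ with $g_K(w)\ne g_L(w)$. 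Finally, even granting vertexwise consistency, your chain of local checks breaks at directions with $|F(K,u)|=|F(K,-u)|$, where no $\sigma_u$ is defined; the paper handles the global assembly by a maximal-arc gluing argument showing that $[K]\ne[L]$ forces $K$ to be centrally symmetric, whence $\overline{K}$ is a translate of $\frac{1}{2}\overline{DK}$ and a contradiction follows --- nothing in your sketch replaces that step.
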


The proof of Theorem~\ref{thm:main-thm} will be given in Section~\ref{sec:mainproof}. It is based on the proof ideas of results for continuous covariograms presented in \cite{Bianchi02}, \cite{MR1938112}, \cite{AveBia07}, \cite{Averkov-Bianchi-ArXiv-2007}. Most parts of our proof of Theorem~\ref{thm:main-thm} can be rendered into an algorithmic form. A detailed analysis of algorithmic issues would certainly be interesting and valuable, but it would go beyond the scope of this paper.

\begin{remark}
	Note that $\delta(K) \le 2n^2$ whenever $U(K) \subseteq \{-n,\ldots,n\}^2$ and $n \in \natur$. Thus, the assumption on $m(K)$ in Theorem~\ref{thm:main-thm} can be replaced by the less technical but somewhat stronger assumption $m(K) \ge 4 n^4 + 2 n^2+1$.
\end{remark}

Since examples in \cite{MR2160045} and \cite{Daurat-Gerard-Nivat-2005} show that $K \in \LCONV{2}$ is not always reconstructible from $g_K$, we also study nonreconstructible sets $K \in \LCONV{2}$. That is, we work towards a description of nontrivially homometric pairs of sets lying in $\LCONV{2}$.

We first present known information on the related question on the structure of homometric pairs of finite subsets of $\integer^d$. In \cite{RoSey82} it is shown that homometry for subsets of $\integer^d$ can be explained by considering the reconstruction from the autocorrelation within the group ring $\bF[\integer^d]$, where $\bF$ is $\integer$, $\real$, or $\compl$. Let us give more details. Let $Z:=Z_1,\ldots,Z_d$ be a sequence of independent commuting indeterminates. Given $u=(u_1,\ldots,u_d) \in \integer^d$, by $Z^u$ we denote the expression $Z_1^{u_1} \cdots Z_d^{u_d}$. Then $\bF[\integer^d]$ is the ring of all expressions $f :=\sum_{u \in \integer^d} a_u Z^u$ such that $a_u \in \bF$ is equal to zero for all but finitely many $u$; the ring addition and multiplication are introduced in the same way as in polynomial rings. For $f$ as above we define $f^\ast := \sum_{u \in\integer^d} \overline{a_u} Z^{-u}$, where $\overline{a_u}$ is the complex conjugate of $a_u$. The \term{autocorrelation} of $f$ is then defined to be $f f^\ast$. In \cite[Theorem~2.3]{RoSey82} it is shown that $f_1, f_2 \in \bF[\integer^d]$ are homometric (i.e., have equal autocorrelations) if and only if $f_1 = Z^{u} h_1 h_2$ and $f_2 = c Z^{v} h_1 h_2^\ast$ for some $u, v \in \integer^d$, $c \in \bF$ with $|c|=1$ and $h_1, h_2 \in \bF[\integer^d]$. (The proof is based on the fact that $\bF[\integer^d]$ is a unique factorization domain.) Since every finite subset $K$ of $\integer^d$ can be identified with its \term{generating function} $f_K:= \sum_{u \in K} Z^u$ and since $f_K f_K^\ast$ provides the same information as $g_K$, the latter result can also be used for homometric pairs of finite subsets of $\integer^d$.

As the problem of reconstruction of a finite $K \subseteq \integer^d$ from $g_K$ is more geometric than the analogous problem for $\bF[\integer^d]$, one might expect a more geometric reason of homometry. This is sometimes the case. The explanation will require the notion of direct (Minkowski) sums. We call the Minkowski sum $K=S+T$ of two lattice sets $S,T\in\integer^d$ \emph{direct}, if for each $s,s'\in S$ and $t,t'\in T$ the equality $s+t=s'+t'$ implies $s=s'$ and $t=t'$. In this case we write $K=S\oplus T$. It is easy to see that if the sum of $S$ and $T$ is direct, then the sum of $S$ and $-T$ is also direct. One also readily verifies that if $K,L\in\integer^d$ satisfy $K=S\oplus T$ and $L=S\oplus(-T)$ for suitable $S,T\in\integer^d$, then $g_K=g_L$. Moreover, in this case one has $[K]\neq[L]$ if and only if both $S$ and $T$ are not centrally symmetric. So direct Minkowski sums provide a way to construct examples of pairs of nontrivially homometric lattice sets. We say that the homometric pairs $(K,L)$ obtained this way are \term{geometrically constructible}. For $K, L, S, T$ as above the relations $K=S \oplus T$ and $L=S \oplus (-T)$ carry over to $f_K = f_{S \oplus T} = f_S f_T$ and $f_{S \oplus (-T)} = f_S f_{-T} = f_S f_T^\ast$. This provides a link to the result for $\bF[\integer^d]$. In \cite{RoSey82} a homometric pair of subsets of $\integer^d$ (for $d=1$) was given which is not geometrically constructible. This justifies the extension from the class of finite subsets of $\integer^d$ to $\bF[\integer^d]$.

We wish to study the retrieval of $K$ from $g_K$ for $K \in \LCONV{d}$. Taking into account the presented information on $\bF[\integer^d]$, the following problem can be formulated:

\begin{problem} \label{factors:of:g_K}
	Let $d \ge 2$ and let $\bF$ be $\integer$, $\real$ or $\compl$. Describe all $h \in \bF[\integer^d]$ such that $h$ is a factor of $f_K$ for some $K \in \LCONV{d}$.
\end{problem}

Let us briefly discuss the analogous problem for $d=1$. The problem resembles to the study of the factors of the polynomial $\sum_{k=0}^{n-1}Z^k=(Z^n-1)/(Z-1)$ for some $n\in\natur$. For $\bF=\compl$, the irreducible factors correspond to just the roots of unity (except  $1$), and for $\bF=\real$ one can describe the factors with the results for the complex case. For $\bF=\integer$ the problem is well-studied, see \cite[Ch.~VI, \S3]{MR1878556}; the corresponding factors can be determined with the help of the so-called \term{cyclotomic polynomials}.

Also, the following question arises naturally:

\begin{question} \label{quest:lconv:homometric:non-geom}
 Do there exist nontrivially homometric pairs $(K,L)$ of sets in $\LCONV{2}$ which are not geometrically constructible?
\end{question}

We performed an exhaustive computer search which showed that all homometric pairs of sets $\LCONV{2}$ which are contained in $\{1,\ldots,6\} \times \{1,\ldots,5\}$ are geometrically constructible. Our computer search also covered the homometric pairs presented in \cite{MR2160045} and \cite{Daurat-Gerard-Nivat-2005}, up to affine automorphisms of $\integer^2$. The fact that the example in \cite{MR2160045} is geometrically constructible was noticed in \cite[pp.~280-281]{Benassi-Bianchi-DErcole-2010}.

Another natural research direction is the study of geometrically constructible homometric pairs from $\LCONV{2}$. A partial information is provided by Theorem~\ref{thm:homometry:from:lat:wid:one} given below. A set $T \in \LCONV{2}$ is said to have \term{lattice width $1$} if $T$ can be covered by two distinct parallel lines. Theorem~\ref{thm:homometry:from:lat:wid:one} characterizes lattice-convex direct sums $S \oplus T$ with $S \subseteq \integer^2$ and with $T\in \LCONV{2}$ having lattice width $1$.

\begin{theorem} \label{thm:homometry:from:lat:wid:one} 
Let $k,\ell$ be integers with $k>\ell\geq 0$. We define $T:=(\{0,\ldots,k\} \times \{0\})\cup(\{0,\ldots,\ell\} \times \{1\})$, the vectors $w_1:= (-k-1,1)$, $w_2:= (\ell + 1, 1)$, and the lattice $\bL:=\integer w_1+\integer w_2$. Let $S$ be a set with $o \in S\subseteq \integer^2$. Then the following conditions are equivalent:
 \begin{enumerate}[(i)]
 \item \label{item:direct-and-convex} The sum of $S$ and $T$ is direct and lattice-convex.
 \item \label{item:S-is-special} $S$ is lattice-convex with respect to $\bL$ and $\overline{S}$ is a polyhedron (in $\real^2$) such that
	\begin{itemize}
		\item every edge of $\overline{S}$ is parallel to $w_1$ or $w_2$ if $k > \ell+1$ and
		\item every edge of $\overline{S}$ is parallel to $w_1$, $w_2$, or $w_1+w_2$ if $k = \ell +1$.
	\end{itemize}
 \end{enumerate}
\end{theorem}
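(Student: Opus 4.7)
The plan is to exploit that $T$ is a complete set of coset representatives for $\integer^2/\bL$. Indeed, $|T|=k+\ell+2$ equals $|\det(w_1,w_2)|$, the index of $\bL$ in $\integer^2$, and a routine inspection of $T-T$ restricted to the strip $\integer\times\{-1,0,1\}$ shows $(T-T)\cap\bL=\{o\}$. Hence every $p\in\integer^2$ decomposes uniquely as $p=\phi(p)+\pi(p)$ with $\phi(p)\in\bL$ and $\pi(p)\in T$; moreover, if $S\subseteq\bL$, then $(S-S)\cap(T-T)\subseteq\bL\cap(T-T)=\{o\}$, so the sum $S+T$ is direct and $p\in S+T$ if and only if $\phi(p)\in S$.

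For (ii) $\Rightarrow$ (i), directness follows from $S\subseteq\bL$ together with the observation above. For lattice-convexity I must show that $p\in(\overline{S}+\overline{T})\cap\integer^2$ implies $\phi(p)\in\overline{S}$ (whence $\phi(p)\in\overline{S}\cap\bL=S$, so $p\in S+T$). I verify this edge by edge using support functions. For every primitive outer normal $u$ of an edge of $\overline{S}$, condition (ii) ensures $u$ is perpendicular to $w_1$, to $w_2$, or (when $k=\ell+1$) to $w_1+w_2$. A short computation gives $\sprod{w_i}{u}\in\{0,\pm(k+\ell+2)\}$ in each case, hence $\sprod{\bL}{u}\subseteq(k+\ell+2)\integer$, and $h(\overline{S},u)$ is a multiple of $k+\ell+2$ because it is attained at a vertex lying in $\bL$. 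A second short computation shows $\{\sprod{t}{u}:t\in T\}$ is a block of $k+\ell+2$ consecutive integers, so $h(\overline{T},u)-\min_{t\in T}\sprod{t}{u}=k+\ell+1$. Plugging $\sprod{p}{u}\le h(\overline{S},u)+h(\overline{T},u)$ into $\sprod{\phi(p)}{u}=\sprod{p}{u}-\sprod{\pi(p)}{u}$ yields $\sprod{\phi(p)}{u}\le h(\overline{S},u)+(k+\ell+1)$; since both $\sprod{\phi(p)}{u}$ and $h(\overline{S},u)$ are multiples of $k+\ell+2$, this sharpens to $\sprod{\phi(p)}{u}\le h(\overline{S},u)$. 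Running this over all outer normals of $\overline{S}$ gives $\phi(p)\in\overline{S}$, as required.

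For (i) $\Rightarrow$ (ii), I first show $S\subseteq\bL$. If some $s_0\in S$ had $s_0\notin\bL$, then $[o,s_0]\subseteq\overline{S}$ forces $\overline{S}+\overline{T}$ to contain a $\bL$-lattice point (essentially $\phi(s_0)$ or a nearby translate) whose $\phi$-image is not in $S$, contradicting lattice-convexity via $p\in S+T\Leftrightarrow\phi(p)\in S$. Once $S\subseteq\bL$ is established, lattice-convexity of $S$ with respect to $\bL$ follows at once from the relation $(\overline{S}+\overline{T})\cap\integer^2=S+T$. For the edge condition I argue contrapositively: suppose an edge of $\overline{S}$ has primitive outer normal $u$ not perpendicular to $w_1$, $w_2$, or (when $k=\ell+1$) $w_1+w_2$. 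Then $\sprod{\bL}{u}$ has step size strictly smaller than $k+\ell+2$, while $\{\sprod{t}{u}:t\in T\}$ fails to be a complete residue system modulo this step. Exploiting this mismatch I construct a lattice point $p\in\overline{S}+\overline{T}$ with $\phi(p)\notin\overline{S}$, again contradicting lattice-convexity.

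The main obstacle is the converse direction, in particular the explicit production of the offending lattice point. The dichotomy $k>\ell+1$ versus $k=\ell+1$ enters here naturally: for $u=\pm(2,k-\ell)$, the normal to $w_1+w_2$, the set $\{\sprod{t}{u}:t\in T\}$ is a complete residue system modulo $k+\ell+2$ precisely when $k=\ell+1$, which is why edges in direction $w_1+w_2$ are admitted exactly in that case. Low-dimensional degeneracies where $\overline{S}$ is a single point or a segment will need separate, though routine, treatment.
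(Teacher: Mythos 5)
Your direction \emph{(ii) $\Rightarrow$ (i)} is essentially the paper's own argument (the decomposition $\integer^2=\bL\oplus T$ plus a coordinate computation showing that perturbing by $\overline{T}-\overline{T}$ cannot violate any defining inequality of $\overline{S}$), merely recast in support-function language; the degenerate cases you flag are indeed routine. The problems are concentrated in \emph{(i) $\Rightarrow$ (ii)}, which is where the substance of the theorem lies. Your first step, $S\subseteq\bL$, is circular as written: the criterion ``$p\in S+T\Leftrightarrow\phi(p)\in S$'' that you invoke to produce a contradiction is only valid once $S\subseteq\bL$ is known (if $s\notin\bL$ and $t\in T$, then $\phi(s+t)$ need not equal $s$ or $\phi(s)$), and ``essentially $\phi(s_0)$ or a nearby translate'' does not exhibit an actual integer point of $\overline{S}+\overline{T}$ outside $S+T$. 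In the paper this single step is the combined content of Lemmas~\ref{lem:alternate}--\ref{lem:G_S-is-connected} (alternation of the translates of the two rows of $T$ in every horizontal section, analysis of consecutive short sections, and connectedness of the adjacency graph $G_S$), and no one-line substitute is apparent.

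More seriously, the mechanism you propose for excluding forbidden edge directions rests on a false claim. Take $k-\ell$ odd and greater than $1$, say $k=3$, $\ell=0$, and the primitive outer normal $u=(2,k-\ell)=(2,3)$ of an edge parallel to $w_1+w_2$. Then $\sprod{w_1}{u}=-5$ and $\sprod{w_2}{u}=5$, so $\sprod{\bL}{u}=(k+\ell+2)\integer$: the step size is \emph{not} strictly smaller than $k+\ell+2$. Moreover $\setcond{\sprod{t}{u}}{t\in T}=\{0,2,4,6,3\}$ is a complete residue system modulo $5$, so there is no one-dimensional residue mismatch in the direction $u$ for your construction to exploit. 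Yet such an edge must be excluded: for $S=\{o,\,w_1+w_2\}=\{(0,0),(-3,2)\}$ the sum $S+T$ is direct, but $(-1,1)=\frac12\bigl((0,0)+(-2,2)\bigr)$ lies in $\overline{S+T}\setminus(S+T)$. The obstruction is genuinely two-dimensional --- a diagonal pair $s-s'=w_1+w_2$ forces the intermediate points $s'+w_1$ and $s'+w_2$ into $S$ (the content of Lemma~\ref{lem:diagonal=>anotherdiagonal}) --- and cannot be detected from support-function data in the single direction $u$. This part of your argument would have to be replaced by something like the paper's combinatorial analysis of neighbouring translates of $T$.
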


The assumption $o \in S$ is not really restrictive, since Condition~\eqref{item:direct-and-convex} is invariant under translations of $S$. Thus, assuming $o \in S$,  Condition~\eqref{item:S-is-special} can be formulated in a less technical form. Condition~\eqref{item:S-is-special} is invariant under the change of $S$ by $-S$. Thus, if \eqref{item:S-is-special} holds, then $S \oplus T$ and $S \oplus (-T) = - ((-S) \oplus T)$ are both lattice-convex. If $k > \ell+1$ and $S$ is a finite set satisfying Condition~\eqref{item:S-is-special}, then $\overline{S}$ is a parallelogram or a line segment (and by this, centrally symmetric). Thus, as a direct consequence of Theorem~\ref{thm:homometry:from:lat:wid:one}  we obtain:

\begin{corollary}\label{cor:counterexa}
	Let $S$, $T$, $k$ and $\ell$ be as in the formulation of Theorem~\ref{thm:homometry:from:lat:wid:one}, let $S$ be finite and the sum of $S$ and $T$ be direct. Then the following conditions are equivalent: 
	\begin{enumerate}[(i)]
		\item $(S \oplus T, S \oplus (-T))$ is a nontrivially homometric pair of sets in $\LCONV{2}$.
		\item All of the following properties are satisfied:
		\begin{itemize}
			\item $k=\ell+1$,
			\item $S$ is lattice-convex with respect to $\bL$,
			\item $\overline{S}$ is a polyhedron (in $\real^2$) with every edge parallel to $w_1$, $w_2$, or $w_1+w_2$,
			\item $S$ is not centrally symmetric.
		\end{itemize}
	\end{enumerate}
\end{corollary}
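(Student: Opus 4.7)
The plan is to derive the corollary directly from Theorem~\ref{thm:homometry:from:lat:wid:one} together with the general facts on direct Minkowski sums recalled in the introduction. Three preliminary observations are central: (a)~$T$ is not centrally symmetric under the hypothesis $k>\ell\geq 0$, since its two ``rows'' contain $k+1$ and $\ell+1$ lattice points respectively; (b)~whenever $K=S\oplus T$ and $L=S\oplus(-T)$ both lie in $\LCONV{2}$, the pair $(K,L)$ is homometric, and $[K]\neq[L]$ holds iff both $S$ and $T$ are not centrally symmetric; (c)~the identity $S\oplus(-T)=-((-S)\oplus T)$ lets me reduce lattice-convexity of $S\oplus(-T)$ to that of $(-S)\oplus T$.

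First I would verify that condition~\eqref{item:S-is-special} is invariant under $S\mapsto -S$: the lattice $\bL$ is symmetric about the origin, and reflecting an edge of $\overline{S}$ preserves its direction up to sign, hence preserves parallelism to $w_1$, $w_2$, or $w_1+w_2$. Applying Theorem~\ref{thm:homometry:from:lat:wid:one} both to $(S,T)$ and to $(-S,T)$ then lets me reduce, under the standing hypothesis that $S+T$ is direct, the joint membership $S\oplus T,\,S\oplus(-T)\in\LCONV{2}$ to the single condition~\eqref{item:S-is-special} on $S$; the affine spanning condition is automatic from $\aff T=\real^2$.

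With this reduction the equivalence is essentially a repackaging of Theorem~\ref{thm:homometry:from:lat:wid:one}. The backward direction is immediate: if all four listed properties hold, both direct sums lie in $\LCONV{2}$, their covariograms coincide, and $[S\oplus T]\neq[S\oplus(-T)]$ follows from (a) together with the non-central-symmetry of $S$. For the forward direction only one step remains, namely to exclude $k>\ell+1$. The argument I would use is the following: if $k>\ell+1$, then~\eqref{item:S-is-special} forces every edge of the bounded convex polygon $\overline{S}$ to be parallel to $w_1$ or $w_2$; by convexity each of these two directions contributes at most two edges, so $\overline{S}$ is a (possibly degenerate) parallelogram, hence centrally symmetric. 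Lattice-convexity of $S$ with respect to $\bL$ yields $S=\overline{S}\cap\bL$, so $S$ inherits this central symmetry, contradicting the non-central-symmetry of $S$ forced by (b).

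The expected main obstacle is minor and consists in handling the last parallelogram argument cleanly, in particular covering the degenerate shapes of $\overline{S}$ (a line segment or a single point) alongside the non-degenerate parallelogram case. Everything else is bookkeeping on top of Theorem~\ref{thm:homometry:from:lat:wid:one} and the direct-sum formalism already set up in the introduction.
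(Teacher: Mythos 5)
Your proposal is correct and takes essentially the same route as the paper: the paper likewise derives the corollary from Theorem~\ref{thm:homometry:from:lat:wid:one} by noting that Condition~(ii) is invariant under $S\mapsto -S$ (so that $S\oplus(-T)=-((-S)\oplus T)$ is also lattice-convex), that nontriviality of the homometric pair is equivalent to $S$ and $T$ both being non-centrally-symmetric, and that $k>\ell+1$ is excluded because Condition~(ii) then forces $\overline{S}$ to be a parallelogram or a segment, hence centrally symmetric. Your handling of the degenerate shapes and of $S=\overline{S}\cap\bL$ matches the paper's level of detail.
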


 Corollary~\ref{cor:counterexa} describes (up to affine automorphisms of $\integer^d$) all pairs of nontrivially homometric sets that can be written as $(S\oplus T, S\oplus (-T))$ with a lattice-convex summand $T$ of lattice width $1$. Thus, on the one hand,  Corollary~\ref{cor:counterexa} shows that there are infinitely many nontrivially homometric pairs of sets in $\LCONV{2}$ (see Figure~\ref{fig:counterexa} for an illustration). On the other hand, under the given restrictions on $T$, these homometric pairs are `sporadic' in the sense that $\overline{S}$ has at most six edges and very restricted normal vectors and the shape of $T$ is also very specific (see the condition $k=\ell+1$ in Corollary~\ref{cor:counterexa}).
\begin{figure}
 \centering \tikzhide{\tikzCounterexa}%
 \pdfhide{\includegraphics{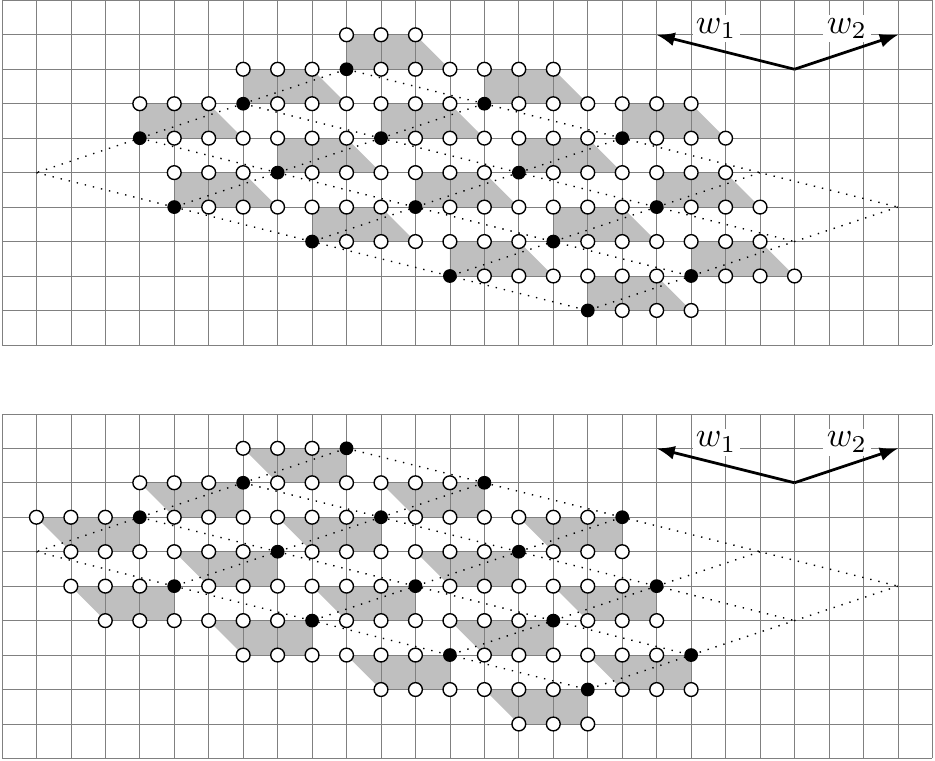}}
 \parbox[t]{0.9\textwidth}{%
 \caption{Corollary~\ref{cor:counterexa} generalizes the counterexample from \cite{MR2160045}. This picture shows $S\oplus T$ (above) and $S\oplus(-T)$ (below). The elements of $S$ are drawn as black points and the convex hulls of the translates of $T$ and $-T$ are indicated by gray polygons.}
 \label{fig:counterexa}}
\end{figure}%

We note that, up to affine automorphisms of $\integer^2$, all homometric pairs found by our computer search are covered by Corollary~\ref{cor:counterexa}. Thus, we ask the following question:
\begin{question}
 Do there exist nontrivially homometric pairs which are geometrically constructible and do not coincide (up to affine automorphisms of $\integer^2$) with the homometric pairs in Corollary~\ref{cor:counterexa}?
\end{question}

\begin{remark}\label{rem:Baake}
 In~\cite{BaakeGrimm} Baake and Grimm used the example from~\cite{MR2160045} to construct a pair of `distinct homometric model sets'. Loosely speaking, these are mathematical descriptions of quasicrystals that differ from each other in many significant ways, but that cannot be distinguished by their diffraction data. Our Corollary~\ref{cor:counterexa} can be used to extend the result of~\cite{BaakeGrimm} to an infinite series of pairs of distinct homometric model sets.
\end{remark}

\section{Proof of Theorem~\ref{thm:main-thm}}
\label{sec:mainproof}

The proof of Theorem~\ref{thm:main-thm} is organized as follows. First we show that $g_K$ contains (up to translations and reflections) the `local boundary information' on $K$, namely edges (Lemma~\ref{lem:edges-up-to-flips}) and normal cones (Lemma~\ref{lem:all-cones}). These arguments are discrete equivalents of some of the arguments given in \cite{Bianchi02}. The key arguments contained in Lemma~\ref{lem:Bianchi-main} show that the `local boundary information' can be assembled in the unique way to reproduce the boundary of $\overline{K}$ up to translations and reflections. One the one hand, the proof of Lemma~\ref{lem:Bianchi-main} is related to arguments in \cite{Bianchi02} and, on the other hand, to the arguments on the so-called \term{capturing arcs} used in \cite{MR1938112} and \cite{AveBia07} (see also \cite{Averkov-Bianchi-ArXiv-2007} which is a full-length version of \cite{AveBia07}). Somewhat more precisely, if we have a pair of arcs of $\overline{K}$ which is symmetric (with respect to reflection) and this pair is adjacent to a nonsymmetric pair of arcs of $\overline{K}$, then, using certain values of $g_K$, the symmetric and the nonsymmetric pair can be `glued together'. This `gluing procedure' is analogous to a continuous procedure used in \cite[Lemma~3 and the proof of Theorem~1]{AveBia07} (see also \cite[the proofs of Lemma~9 and Theorem~1]{Averkov-Bianchi-ArXiv-2007}). The mentioned arguments from \cite{AveBia07} are refined versions of the arguments from \cite[Definition~4.1, Lemma~4.2, Proposition~4.3]{MR1938112}. Of course, in our setting, we have to adjust the arguments of~\cite{Bianchi02}, \cite{MR1938112} and \cite{AveBia07} (or \cite{Averkov-Bianchi-ArXiv-2007}) to match the discrete case. 

If $K \subseteq \real^2$, then $DK:=K+(-K)$ is said to be the \term{difference set} of $K$. If $K$ is a convex body, then $DK$ is called the \term{difference body} of $K$. It is known and easy to show that for each finite $K \subseteq \integer^d$ one has $\supp g_K = DK$ and $\supp g_{\overline K} = D\overline{K}=\overline{DK}$. If $v$ is a vertex of $K$, we define the \term{normal cone} $N(K,v):= \setcond{u \in \real^d}{v \in F(K,u)}$ and the \term{supporting cone} $S(K,v) := \cone (K-v)$ of $K$ at $v$ (see also \cite[p.\,70]{MR1216521}).
\begin{lemma}\label{lem:edges-up-to-flips}
 Let $K,L \in \LCONV{2}$ with $g_K=g_L$. Then for each $u \in \integer^2 \setminus \{o\}$ there exists $L'\in[L]$  satisfying
	\begin{align}
		F(K,u)&=F(L',u) & &\text{and} &
		F(K,-u)&=F(L',-u). \label{eq:edges-up-to-flips}
	\end{align}
In particular, $m'(K)=m'(L)$, $m''(K)=m''(L)$, $m(K)=m(L)$, $U(K)\cup U(-K)=U(L)\cup U(-L)$, and $\delta(K)=\delta(L)$.
\end{lemma}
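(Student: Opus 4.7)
The plan is to recover the pair of opposite edges $\{F(K,u),F(K,-u)\}$ from the values of $g_K$ on the edge $F(DK,u)$ of the difference body, and then to select the element of $[L]$ whose edges in direction $\pm u$ coincide with those of $K$. All the listed numerical invariants fall out as byproducts.

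First, since $\supp g_K=DK$, the equality $g_K=g_L$ forces $DK=DL$, whence $\overline{DK}=\overline{DL}$ and $U(K)\cup U(-K)=U(DK)=U(L)\cup U(-L)$. Second, for any primitive $v\in\integer^2\setminus\{o\}$ and any $u\in\integer^2$ satisfying $\sprod{u}{v}=h(DK,v)=h(K,v)+h(K,-v)$, a short support-function inequality chain forces every $x\in K\cap(K+u)$ to satisfy $x\in F(K,v)$ and $x-u\in F(K,-v)$; since the reverse inclusion is trivial, $g_K(u)=|F(K,v)\cap(F(K,-v)+u)|$. As $u$ ranges over the lattice points of $F(DK,v)$, this is the classical one-dimensional cross-covariogram of two parallel lattice segments, a trapezoidal function whose support has $|F(DK,v)|$ lattice points and whose plateau value is $\min(|F(K,v)|,|F(K,-v)|)$. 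The plateau value is read off directly from $g_K$, and combined with $|F(DK,v)|$ it recovers the unordered pair $\{|F(K,v)|,|F(K,-v)|\}$ from $g_K$.

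Applying this recovery to both $K$ and $L$ for every primitive $v$ gives $\{|F(K,v)|,|F(K,-v)|\}=\{|F(L,v)|,|F(L,-v)|\}$, and hence $m'(K)=m'(L)$, $m''(K)=m''(L)$, and $m(K)=m(L)$. The identity $|\det(u_1,u_2)|=|\det(-u_1,u_2)|$ yields $D(U)=D(U\cup(-U))$, so $\delta(K)$ depends only on $U(K)\cup U(-K)$ and is therefore determined as well. For the main claim, given $u$ (assumed primitive without loss of generality), I would pick $L^\sharp\in\{L,-L\}$ so that $|F(L^\sharp,u)|=|F(K,u)|$ and $|F(L^\sharp,-u)|=|F(K,-u)|$, and let $\tau\in\integer^2$ be the unique vector with $F(L^\sharp,u)+\tau=F(K,u)$; such $\tau$ exists because both segments are parallel lattice segments of equal length.

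The hard part is verifying that the same $\tau$ automatically also sends $F(L^\sharp,-u)$ onto $F(K,-u)$, and this is where the already-established identity $DK=DL$ is essential. Using $F(DK,u)=F(K,u)-F(K,-u)$ and $F(DL^\sharp,u)=F(L^\sharp,u)-F(L^\sharp,-u)$, the coincidence $F(DK,u)=F(DL^\sharp,u)$ of lattice segments pins down the vector joining the minimal endpoints of $F(\cdot,u)$ and $F(\cdot,-u)$, so the translation that aligns one pair of opposite edges necessarily aligns the other. Setting $L':=L^\sharp+\tau\in[L]$ then yields \eqref{eq:edges-up-to-flips}.
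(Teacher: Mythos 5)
Your proof is correct and takes essentially the same route as the paper's: both restrict $g_K$ to the lattice points of the supporting line $\aff F(DK,u)$, recognize the resulting trapezoidal function as the cross-covariogram of the two parallel support sets $F(K,u)$ and $F(K,-u)$, and recover the pair up to translation and reflection, with the invariants $m'$, $m''$, $m$, $U(K)\cup U(-K)$, $\delta$ falling out as consequences. Your explicit check that the translation aligning $F(L^\sharp,u)$ with $F(K,u)$ automatically aligns the opposite support sets (via $F(DK,u)=F(DL^\sharp,u)$ and the matching of cardinalities) just makes precise the step the paper dispatches with ``it can be seen''.
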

\begin{proof}
 Since $DK = \supp g_K$, the set $DK$ is determined by $g_K$. We reconstruct $F(K,u) \cup F(K,-u)$ by analyzing the values of $g_K$ on $\aff F(DK,u) \cap \integer^2$. 

\emph{Case 1: $F(DK,u)$ is an edge of $DK$.} Then at least one of $F(K,u)$ and $F(K,-u)$ is an edge of $K$, see~\eqref{eq:sum-of-faces}. We consider a nonsingular affine transformation $\tau$ mapping $\integer$ onto $\aff F(DK,u) \cap \integer^2$ (that is, we enumerate the elements of $\aff F(DK,u) \cap \integer^2$ by elements of $\integer$ in a consecutive way). Then there exist $k_1,\ldots, k_4 \in \integer$ with $k_1 \le k_2 \le k_3 \le k_4$ such that $g_K(\tau(k)) = 0$ for $k < k_1$ and $k > k_4$ and $g_K(\tau(k))$ is the maximum of $g_K$ on $\aff F(DK,u) \cap \integer^2$ for $k_2 \le k \le k_3$. It can be seen that $F(K,u) \cup F(K,-u)$ is a translation  or a reflection of $\setcond{\tau(k)-\tau(k_2)}{k_1\leq k\leq k_2} \cup \setcond{\tau(k)}{k_1\leq k\leq k_3}$, see Figure~\ref{fig:edges-up-to-flips} for an illustration.

\emph{Case 2: $F(DK,u)$ is a vertex of $DK$.} Then both $F(K,u)$ and $F(K,-u)$ are vertices of $K$. In analogy to the argumentation above one shows that $F(K,u) \cup F(K,-u)$ is a translation of $\{o\}\cup F(DK,u)$.
\end{proof}

\begin{figure}
 \centering \tikzhide{\tikzEdgesUpToFlips}
 \pdfhide{\includegraphics{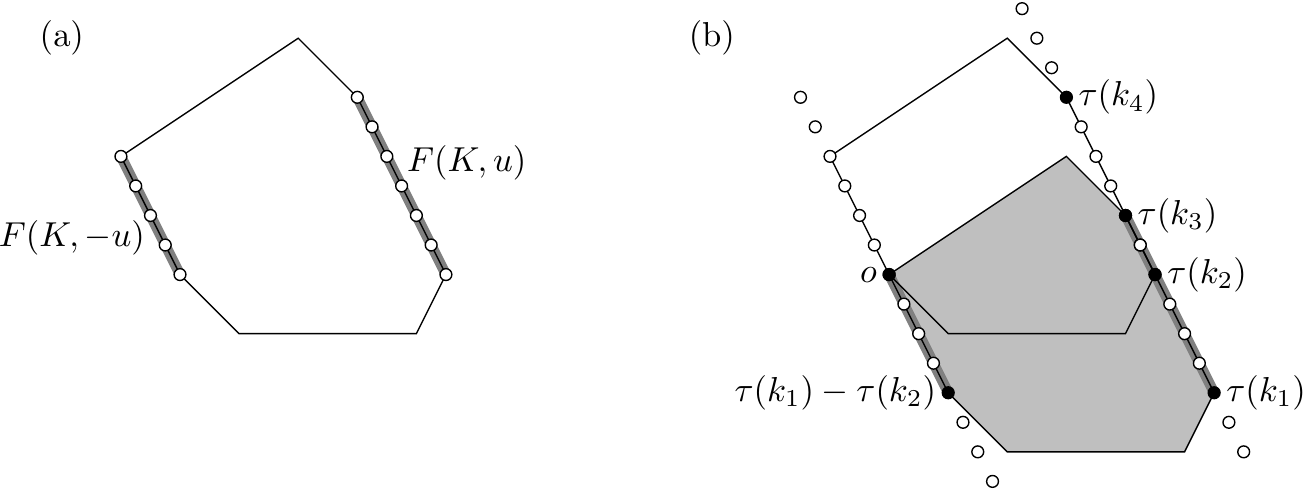}}
 \parbox[t]{0.9\textwidth}{%
 \caption{Illustration of the proof of Lemma~\ref{lem:edges-up-to-flips}, Case 1. (a) A pair of parallel edges of $K$. (b) The reconstruction of this pair from the covariogram.}
 \label{fig:edges-up-to-flips}}
\end{figure}%

In view of Lemma~\ref{lem:edges-up-to-flips} we see that for $K \in \LCONV{2}$ the parameters $m'(K)$, $m''(K)$, $m(K)$ and set $U(K) \cup U(-K)$ are determined by $g_K$, which proves the first statement of Theorem~\ref{thm:main-thm}.

Lemma~\ref{lem:k-exists} given below will be used to ensure that certain triangles are contained in homometric lattice-convex sets. 
\begin{lemma}\label{lem:k-exists}
  Let $K \in \LCONV{2}$ and let $m:=m(K)$ and $\delta:=\delta(K)$ satisfy $m \ge \delta^2+\delta+1$. Then there exists $k\in\natur$ such that $\delta \le k \le {(m-1)}/{\delta}$. Moreover, if $u,v,w\in U(K)\cup U(-K)$ are mutually nonparallel, then $ku=\alpha v+\beta w$ for some $\alpha,\beta\in\real$ that satisfy $1\leq |\alpha|\leq m-1$ and $1\leq|\beta|\leq m-1$. The same holds if $u,v,w\in\integer^2\setminus\{o\}$ are mutually nonparallel, primitive vectors that are parallel to edges of $\overline{K}$.
\end{lemma}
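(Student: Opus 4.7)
The plan is to treat the three parts in sequence, observing that the second and third parts reduce to a routine determinant computation once the key integer $k$ has been chosen.

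For the existence of $k$, I would first note that $\delta \ge 1$, since the elements of $D(U(K))$ are absolute values of $2 \times 2$ integer determinants and hence positive integers. The hypothesis $m \ge \delta^2 + \delta + 1$ rearranges to $(m-1)/\delta \ge \delta + 1$, so the closed interval $[\delta,\,(m-1)/\delta]$ has length at least $1$ and must contain a natural number; concretely, $k := \lceil \delta \rceil$ satisfies $\delta \le k \le \delta + 1 \le (m-1)/\delta$.

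For the second part, since the nonparallel vectors $v, w \in \real^2$ are linearly independent, there exist unique $\alpha, \beta \in \real$ with $ku = \alpha v + \beta w$, and Cramer's rule gives
\[
|\alpha| \;=\; k \cdot \frac{|\det(u,w)|}{|\det(v,w)|}, \qquad |\beta| \;=\; k \cdot \frac{|\det(v,u)|}{|\det(v,w)|}.
\]
The key observation is that negating a vector preserves the absolute value of a $2 \times 2$ determinant, so, writing each of $u,v,w$ as $\pm u',\pm v',\pm w'$ with $u',v',w' \in U(K)$, the numerators and denominator above all lie in $D(U(K))$. By the definition of discrepancy, every ratio of two elements of $D(U(K))$ lies in $[1/\delta,\,\delta]$. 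Combining this with $\delta \le k \le (m-1)/\delta$ yields $|\alpha|,|\beta| \in [k/\delta,\,k\delta] \subseteq [1,\,m-1]$, as required.

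For the third part, I would observe that the $90^\circ$ rotation in $\real^2$ sends $U(K) \cup U(-K)$ bijectively to the set of primitive vectors parallel to the edges of $\overline{K}$, and that this rotation preserves absolute values of $2 \times 2$ determinants; hence the relevant set $D$ and the discrepancy $\delta$ are unchanged, and the determinant argument of the second part carries over verbatim. The main obstacle is purely bookkeeping: confirming $\delta \ge 1$, confirming $D(U(K) \cup U(-K)) = D(U(K))$, and verifying that the inequalities $k/\delta \ge 1$ and $k\delta \le m-1$ follow exactly from the hypothesis $m \ge \delta^2+\delta+1$. The substance of the argument is elementary once the correct choice of $k$ has been made.
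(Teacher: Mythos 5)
Your proposal is correct and follows essentially the same route as the paper: the interval $[\delta,(m-1)/\delta]$ has length at least $1$ by the hypothesis $m\ge\delta^2+\delta+1$, Cramer's rule gives $|\alpha|/k=|\det(u,w)|/|\det(v,w)|$ and $|\beta|/k=|\det(v,u)|/|\det(v,w)|$, and the bounds $1\le k/\delta\le|\alpha|,|\beta|\le k\delta\le m-1$ follow from the definition of $\delta$. Your explicit choice $k=\lceil\delta\rceil$ and the $90^\circ$-rotation argument for the third part merely spell out details the paper leaves implicit.
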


\begin{proof} The first assertion of Lemma~\ref{lem:k-exists} follows from $(m-1)/\delta-\delta\geq 1$. Let $u,v,w\in U(K)\cup U(-K)$ be mutually nonparallel. There exist $\alpha,\beta\in\real\setminus\{0\}$ with $ku=\alpha v+\beta w$. Employing \term{Cramer's rule}, we get
\begin{align*}
 {|\alpha|}/{k} & ={|\det(u,w)|}/{|\det(v,w)|}, & {|\beta|}/{k}
 & ={|\det(v,u)|}/{|\det(v,w)|}.
\end{align*}
Using the definition of $\delta$ and the choice of $k$, this implies that $1\leq {k}/{\delta}\le |\alpha| \le k \delta\leq m-1$ and, similarly, $1\leq {k}/{\delta}\le |\beta| \le k \delta\leq m-1$. If $U$ is the set of primitive vectors parallel to edges of $\overline{K}$, then
$\delta(K)=\delta(U)$. This implies the second assertion.
\end{proof}

\begin{lemma} \label{lem:all-cones}
Let $K, L \in \LCONV{2}$ be such that $g_K=g_L$. Let $m(K) \ge \delta(K)^2+\delta(K)+1$. Let $v$ be a vertex of $K$ and let $u \in \integer^2 \setminus \{o\}$ satisfy $F(K,u)=\{v\}$. Then there exists $L'\in[L]$ such that $F(K,u)=F(L',u)$, $F(K,-u)=F(L',-u)$, and $S(K,v)=S(L',v)$. Moreover, if $F(K,-u)=\{v'\}$ for some vertex $v'$ of $K$, then $L'$ can be chosen to additionally satisfy $S(K,v')=S(L',v')$.
\end{lemma}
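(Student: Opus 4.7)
The plan is to begin by applying Lemma~\ref{lem:edges-up-to-flips} with direction $u$ to produce $L'\in[L]$ satisfying $F(L',u)=F(K,u)=\{v\}$ and $F(L',-u)=F(K,-u)$. Since $F(L',u)=\{v\}$ is a singleton, $v$ is automatically a vertex of $L'$ in direction $u$, so both supporting cones $S(K,v)$ and $S(L',v)$ are well-defined and the remaining task is to match them (and, in the moreover case, also $S(K,v')$ with $S(L',v')$) by showing that no further adjustment of $L'$ is required.

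Let $u_1,u_2\in U(K)$ denote the primitive outer normals of the two edges of $K$ incident to $v$, ordered so that $u\in\relintr\cone\{u_1,u_2\}$, and let $u_1^*,u_2^*\in U(L')$ denote the analogous primitive normals of $L'$ at $v$. By the last clause of Lemma~\ref{lem:edges-up-to-flips}, $U(L')\cup U(-L')=U(K)\cup U(-K)$; together with the fact that $v$ is a vertex of $L'$ in direction $u$, so that $u\in\relintr\cone\{u_1^*,u_2^*\}$, this forces $u_1^*,u_2^*\in U(K)$. Since the supporting cone at a vertex of a lattice-convex set is determined by its two primitive outer normals, the goal reduces to proving $\{u_1,u_2\}=\{u_1^*,u_2^*\}$.

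Assume for contradiction that $u_1\notin\{u_1^*,u_2^*\}$. Applying Lemma~\ref{lem:k-exists} to the primitive triple $u,u_1,u_2$ (all parallel to edges of $\overline{K}$) produces an integer $k$ with $\delta\le k\le(m-1)/\delta$ and positive reals $\alpha_1,\alpha_2$ with $1\le\alpha_i\le m-1$ and $ku=\alpha_1 u_1+\alpha_2 u_2$. Since each edge $F(K,u_i)$ contains at least $m'(K)\ge m$ lattice points, these bounds let me select specific lattice points $p_i\in F(K,u_i)$ with $p_i-v$ a chosen integer multiple of a primitive vector parallel to the edge, so that the triangle with vertices $v,p_1,p_2$ sits inside $\overline{K}$ and all its interior lattice points lie in $K$. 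The presence of this triangle in $K$ forces $g_K$ to take a definite value at a translate $t$ built from $v,p_1,p_2$ (for instance $t=p_1-p_2$, whose overlap count $|K\cap(K+t)|$ is partly controlled by the triangle). In $L'$, the vertex $v$ has a different pair of primitive normals $u_1^*,u_2^*$ with $u_1^*$ separating $u_1$ from $u$ in the normal fan of $L'$, so the same triangle construction does not fit at $v$ in $L'$; this produces a strict inequality between $g_K(t)$ and $g_{L'}(t)$, contradicting $g_K=g_{L'}$. Hence $u_1\in\{u_1^*,u_2^*\}$, and the symmetric argument for $u_2$ yields $\{u_1,u_2\}=\{u_1^*,u_2^*\}$, so $S(K,v)=S(L',v)$.

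For the moreover clause, when $F(K,-u)=\{v'\}$ is a singleton we also have $F(L',-u)=\{v'\}$, so $v'$ is a common vertex of $K$ and $L'$ in direction $-u$; applying the same contradiction argument to the pair $(-u,v')$ uses only covariogram values near $v'$ and does not disturb the already-matched edges in directions $\pm u$, so the same $L'$ satisfies $S(K,v')=S(L',v')$ as well. The main obstacle will be the final step of the third paragraph: one must identify a precise lattice translate $t$ and carry out an honest count showing $g_K(t)\ne g_{L'}(t)$ under the assumption $u_1\notin\{u_1^*,u_2^*\}$. The triangle provided by Lemma~\ref{lem:k-exists} is designed exactly to witness such a count, and the hypothesis $m(K)\ge\delta(K)^2+\delta(K)+1$ is the minimal one ensuring this triangle actually fits inside the edges $F(K,u_1),F(K,u_2)$ with enough room to force the discrepancy.
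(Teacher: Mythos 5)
Your overall strategy --- normalize $L'$ via Lemma~\ref{lem:edges-up-to-flips} and then rule out a mismatch of supporting cones at $v$ by exhibiting a covariogram discrepancy built from the triangle supplied by Lemma~\ref{lem:k-exists} --- is the right one, and it is essentially what the paper does in the case where $F(K,-u)$ is an \emph{edge}. But there are two genuine gaps. First, you treat the edge case and the vertex case of $F(K,-u)$ uniformly, and this cannot work: the paper's counting argument compares $g$ at the two lattice points $v-p+kw_1$ and $v-p+kw_1+w$, where $p$ is an endpoint of the edge $F(K,-u)=F(L',-u)$ and $w$ is the primitive vector along that edge; the whole point is that stepping by $w$ slides along an edge \emph{common to $K$ and $L'$}, so the difference of the two covariogram values isolates the local geometry at the disputed vertex. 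When $F(K,-u)=\{v'\}$ is a vertex there is no such edge, and the paper instead reduces to the edge case by finding an auxiliary direction $u'$ for which $F(K,-u')$ is an edge containing $v$ (or $v'$), and then must separately argue that the signs are consistent, i.e.\ that $N(K,v)$ and $N(K,v')$ (rather than one cone and the reflection of the other) are simultaneously normal cones of $L$; this uses that $N(K,v)$ and $-N(K,v')$ have intersecting interiors and so cannot both occur. Your ``moreover'' paragraph asserts exactly this consistency without proof, and it is the delicate point of the whole lemma.

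Second, the heart of your third paragraph is not yet an argument. The single translate $t=p_1-p_2$ gives one covariogram value $g_K(t)=|K\cap(K+t)|$, which counts contributions from all of $K$, most of which $K$ and $L'$ share; nothing you have written shows the two counts differ. The paper's device is to compare \emph{two} values of $g$ differing by a step $w$ along the shared opposite edge, so that everything except the behaviour near $v$ cancels, and this requires both the existence of that edge and the specific placement $v-p+kw_1$. A smaller issue: $U(L')\cup U(-L')=U(K)\cup U(-K)$ does not force the normals of $L'$ at $v$ to lie in $U(K)$ rather than $U(-K)$; that inference is neither justified nor needed. As it stands the proposal identifies the correct ingredients but leaves the decisive computation and the entire vertex--vertex case unproved.
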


\begin{proof} Put $m:=m(K)$ and $\delta:=\delta(K)$.

 \emph{Case 1: $F(K,-u)$ is an edge of $K$.}
 Due to Lemma~\ref{lem:edges-up-to-flips} we find $L'\in[L]$ such that $F(K,u)=F(L',u)$, $F(K,-u)=F(L',-u)$. It remains to show that $S(K,v)=S(L',v)$. Let $w_1,\ldots,w_4 \in \integer^2$ be vectors with $\gcd(w_i)=1$ for each $i\in\{1,\ldots,4\}$ such that if $E$ is an edge of $K$ or $L'$ with $v \in E$, then for some $i\in\{1,\ldots,4\}$ the point $v+w_i$ is the element of $E \setminus \{v\}$ closest to $v$. We argue by contradiction and assume that $S(K,v) \ne S(L',v)$. After possibly interchanging $K$ with $L'$ and reordering $w_1,\ldots,w_4$, we assume that $S(K,v)= \cone \{w_1,w_2\}$ and $w_1 \not\in S(L',v)$. Let $w \in \integer^2$ with $\gcd(w)=1$ be parallel to $F(K,-u)$ and directed in such a way that $w_1\in\cone\{w_i,-w\}$ for $i\in\{2,3,4\}$. See Figure~\ref{fig:all-cones} for an illustration of the situation and the following arguments.

 \begin{figure}
 \centering \tikzhide{\tikzAllCones}%
 \pdfhide{\includegraphics{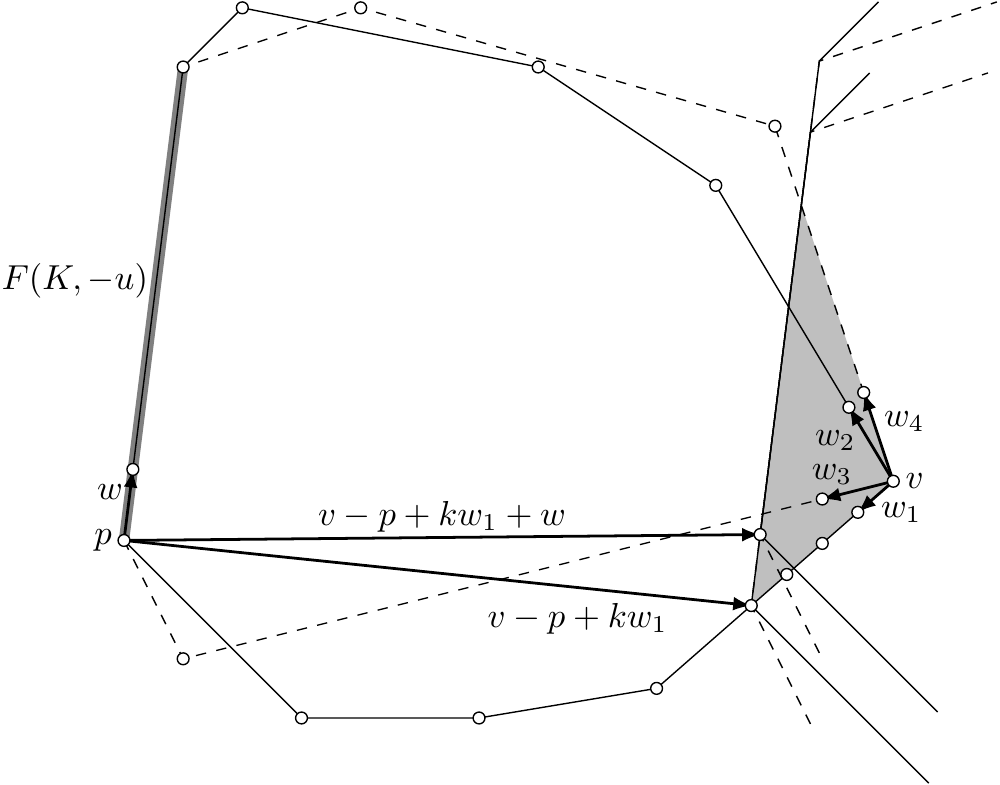}}
 \parbox[t]{0.9\textwidth}{%
 \caption{Illustration of the proof of Lemma~\ref{lem:all-cones}, Case 1. The boundary of $\overline{K}$ and its translations are drawn with a solid line. The boundary of $\overline{L'}$ and its translations are drawn with a dashed line. }\label{fig:all-cones}}
\end{figure}%

By Lemma~\ref{lem:k-exists} there exists $k\in\natur$ with $k\leq m-1$ and there exist real numbers $\alpha_2,\alpha_3,\alpha_4,\beta_2,\beta_3,\beta_4$ such that $k w_1 = \alpha_i w_i - \beta_i w$ and $1\leq \alpha_i\leq m-1$ and $1\leq\beta_i\leq m-1$ for each $i\in\{2,3,4\}$. Let $p$ be the vertex of $F(K,-u)=F(L',-u)$ such that $p+w$ belongs to $F(K,-u)=F(L',-u)$. Since $\alpha_i,\beta_i \le m-1$ for each $i\in\{2,3,4\}$ and by definition of $m=m(K)$, the sets $\overline{K} \cap (\overline{K}+ v - p + k w_1)$ and $\overline{L'} \cap (\overline{L'}+ v - p + k w_1)$ are triangles whose edges are parallel to the vectors from $\{w,w_1,\ldots,w_4\}$. The fact that $\beta_3,\beta_4 \ge 1$ implies that $v+kw_1+w$ is not contained in the interior of $\overline{L'}$. So, by construction, one has
\begin{align*}
 v + k w_1 \not\in \overline{K} \cap (\overline{K}+ v - p + k w_1+w)
 & \varsubsetneq \overline{K} \cap (\overline{K}+ v - p + k w_1) \ni v + k w_1,\\
 \overline{L'} \cap (\overline{L'}+ v - p + k w_1 + w) & = \overline{L'} \cap
 (\overline{L'}+ v - p + k w_1).
\end{align*}
It follows that $g_K(v - p + k w_1+w) < g_K(v-p+k w_1)$ and $g_{L'}(v - p + k w_1+w) = g_{L'}(v-p+k w_1)$ and by this $g_K \ne g_{L'}$, a contradiction.

\emph{Case 2: $F(K,-u)=\{v'\}$ for some vertex $v'$ of $K$}. The main proof idea for this case is essentially borrowed from \cite[Lemma~3.1 (Case~2)]{Bianchi02}. Let us first show that $S(K,v)$ or $-S(K,v)$ is a supporting cone of $L'$ for some $L'\in[L]$. We do this by proving an equivalent statement, namely that $N(K,v)$ or $-N(K,v)$ is a normal cone of $L$ (with respect to an appropriate vertex of $L$), because the arguments for the normal cones will turn out to be more natural. We distinguish the following subcases:

\emph{Case 2.1: There exists $u'$ such that $F(K,-u')=\{v\}$ and $F(K,u')$ is an edge.}

\emph{Case 2.2: For each $u'$ such that $F(K,-u')=\{v\}$, the set $F(K,u')$ is a vertex of $K$}.

For Case 2.1 we see that $N(K,v)$ or $-N(K,v)$ is a normal cone of $L$ (this follows directly from Case~1). For Case 2.2 observe that $F(K,u')$ is the same vertex for each $u'$ with $F(K,-u')=\{v\}$. In fact, otherwise we could find two directions $u_1,u_2$ such that $F(K,-u_i)=\{v\}$ for $i\in\{1,2\}$ and $F(K,u_i)$ are consecutive vertices of $K$. But then the outer normal $u'$ of the edge joining $F(K,u_1)$ with $F(K,u_2)$ satisfies $F(K,-u')=\{v\}$, a contradiction to the assumption for Case~2.2. Thus, we have $\{v'\}=F(K,u')$ for each $u'$ with $F(K,-u')=\{v\}$. It follows that $-N(K,v) \subseteq N(K,v')$. If $-N(K,v)=N(K,v')$, we apply Lemma~\ref{lem:edges-up-to-flips} and infer that $-N(K,v)$ and $N(K,v')$ are normal cones of $L$, as well. Otherwise $-N(K,v) \varsubsetneq N(K,v')$, and thus there exists a direction $u'$ such that $F(K,-u')$ is an edge containing $v$ and $F(K,u')=\{v'\}$. From Case~2.1, it follows that $N(K,v')$ is determined by $g_K$, up to a sign. That is, for some $L' \in [L]$ the point $v'$ is a vertex of $L'$ and $N(K,v')=N(L',v')$. Then Lemma~\ref{lem:edges-up-to-flips} implies that the set of those edge normals of $K$  and $L'$ that lie in $-N(K,v')$ coincide. Consequently $N(K,v)=N(L',v)$. 

Summarizing the arguments of the subcases, we conclude that $N(K,v)$ or $-N(K,v)$ is a normal cone of $L$. The arguments of the subcases are still valid if we replace $u'$ by $-u'$ and exchange $v$ and $v'$. Thus, we also conclude that $N(K,v')$ or $-N(K,v')$ is a normal cone of $L$. The latter implies that \emph{either} $N(K,v)$ and $N(K,v')$ \emph{or} $-N(K,v)$ and $-N(K,v')$ are both normal cones of $L$ (e.g., $N(K,v)$ and $-N(K,v')$ cannot be simultaneously normal cones of $L$ since their interiors intersect). Taking into account that $F(K,u) \cup F(K,-u)$ and $F(L,u) \cup F(L,-u)$ are translations of each other (see Lemma~\ref{lem:edges-up-to-flips}), we arrive at the assertion of Lemma~\ref{lem:all-cones} within Case~2.
\end{proof}

\begin{remark}
 Lemma~\ref{lem:edges-up-to-flips} and Lemma~\ref{lem:all-cones} already imply that if $K, L \in \LCONV{2}$ satisfy $g_K=g_L$ and if $K$ (and thus $L$) does not contain parallel edges of different lengths, then $m'(K)\ge \delta(K)^2+\delta(K)+1$ ensures that $[K]=[L]$. This is a special case of Theorem~\ref{thm:main-thm}.
\end{remark}

If $K$ contains parallel edges of different lengths, the information gained so far by Lemma~\ref{lem:edges-up-to-flips} and Lemma~\ref{lem:all-cones} is in general not enough to guarantee that $K$ can be reassembled from $g_K$ up to translations and reflections. But $g_K$ provides more information. In fact, Lemma~\ref{lem:Bianchi-main} (which is essentially the same as \cite[Lemma 4.1]{Bianchi02}) tells us that if we could \emph{not} reconstruct $[K]$ from $g_K$ in a unique fashion, then for another solution $L$ there must occur certain `symmetries' in the parts where the boundaries of $K$ and $L$ (or translations or reflections of $L$) overlap. This will be the key argument in the proof of Theorem~\ref{thm:main-thm}.

By $\mathbb{S}^1$ we denote the unit circle $\setcond{u\in\real^2}{ \sprod{u}{u}=1 }$ in $\real^2$.

\begin{lemma}\label{lem:Bianchi-main}
 Let $K, L\in \LCONV{2}$ be such that $g_K=g_{L}$ and $m(K)\geq \delta(K)^2+\delta(K)+1$. Assume that there is a nonempty arc $U\subseteq \mathbb{S}^1$ that satisfies
 \begin{align}
 \label{eq:arc}
 G(U):=&\bigcup_{u\in U} F(K,u) = \bigcup_{u\in U} F(L,u), &
 G(-U):=&\bigcup_{u\in -U} F(K,u) = \bigcup_{u\in -U} F(L,u)
 \end{align}
 and that is maximal (with respect to inclusion) with the property~\eqref{eq:arc}. Let $A$ be the inclusion-maximal arc satisfying $G(U) \subseteq A \subseteq \bd \overline{K} \cap \bd \overline{L}$ and $B$ be the inclusion-maximal arc satisfying $G(-U) \subseteq B \subseteq \bd \overline{K} \cap \bd \overline{L}$. Let $[K]\neq [L]$. Then one of the following holds: $|A|=1$ or $|B|=1$, or $A$ and $B$ are parallel line segments, or $A$ is a reflection of $B$.
\end{lemma}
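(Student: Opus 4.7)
My plan is to argue by contradiction: assume that $[K]\neq[L]$, that $|A|>1$ and $|B|>1$, that $A,B$ are not parallel line segments, and that $A$ is not a reflection of $B$, and then show that $U$ can be strictly enlarged while still satisfying \eqref{eq:arc}, contradicting its maximality. To set this up, I would first record what the endpoints of $U$ and the endpoints of $A,B$ look like. Write $U=[u_1,u_2]$ as a closed arc and let $p_i$ and $q_i$ denote the endpoints of $A$ and $B$ associated with the direction $u_i$ and $-u_i$, respectively. By Lemma~\ref{lem:edges-up-to-flips} the pairs $(F(K,u_i),F(K,-u_i))$ and $(F(L,u_i),F(L,-u_i))$ are congruent under some translation or reflection, so each $p_i$ (respectively $q_i$) is a common vertex of $\overline K$ and $\overline L$.

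Next I would extract local information beyond the endpoints. By Lemma~\ref{lem:all-cones} applied to $p_i$ (and to $q_i$ when it is not forced to be an edge of $A$), either $S(K,p_i)=S(L,p_i)$, in which case a small open arc of directions beyond $u_i$ could be adjoined to $U$, contradicting maximality; or $S(L,p_i)=-S(K,p_i)$, so the supporting cones at $p_i$ differ by a reflection. The same dichotomy applies at $q_i$. This is the point at which the four alternative conclusions start to emerge, and where I would invoke the triangle construction that was the engine of Lemma~\ref{lem:all-cones}: using Lemma~\ref{lem:k-exists}, choose a positive integer $k$ with $\delta\le k\le(m-1)/\delta$ and edge-directions $w,w'$ so that a translate $t$ of the form $v-p+kw_1$ (with $v$ a vertex near $p_i$ and $p$ a vertex near $q_i$ or its reflection) makes both $\overline K\cap(\overline K+t)$ and $\overline L\cap(\overline L+t)$ into triangles whose edge lengths are determined by the parts of $\bd\overline K$ and $\bd\overline L$ just outside $A$ and $B$. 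The bound $m(K)\ge\delta^2+\delta+1$ ensures that these triangles do fit inside the respective edges and that the combinatorial count $g_K(t)$, $g_L(t)$ faithfully records the geometry.

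Once the triangle configuration is in place, comparing $g_K(t)$ to $g_L(t)$ for $t$ chosen near each endpoint yields a constraint linking the pair of cones at $p_i$ (i.e.\ the edge of $K$, and the edge of $L$, leaving $A$ at $p_i$) with the pair of cones at $q_i$ leaving $B$. Concretely, the equality $g_K(t)=g_L(t)$ forces the two triangles to have the same lattice-point count, and using the reflection dichotomy at each endpoint this translates into one of the stated alternatives: either the cone pair at $p_i$ coincides (so $A$ can be extended past $p_i$, contradicting the maximality of $A$ which was implicit in the maximality of $U$), or it is a reflection which must be matched by a reflection at $q_i$ (leading to $A$ being a reflection of $B$), or the edges at $p_i$ and $q_i$ are parallel in a way that forces $A$ and $B$ to be parallel line segments. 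Exhausting the possibilities eliminates all scenarios except those in the conclusion or $|A|=1$ or $|B|=1$.

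The main obstacle I expect is the bookkeeping in this last step: one must simultaneously handle both endpoints of $U$ and both arcs $A,B$, taking into account that the relevant reflection at $q_i$ may be with respect to a center different from the one at $p_i$. The discrete analogue of Bianchi's argument requires showing that the only way to reconcile all four local cone-pairs with the covariogram equality—without being able to extend $U$—is for the \emph{global} symmetry (parallelism of $A$ and $B$, or a single point-reflection sending $A$ to $B$) to hold. Once that compatibility lemma is established, the conclusion follows. The lattice-width assumption bundled into $m(K)\ge\delta(K)^2+\delta(K)+1$ and Lemma~\ref{lem:k-exists} together guarantee that the translations $t$ needed to make the triangles fit are available as honest lattice vectors, which is the only essentially new ingredient beyond Bianchi's continuous argument.
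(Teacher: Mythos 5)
There is a genuine gap, and it sits exactly where you defer to an unproven ``compatibility lemma.'' Your plan extracts local information only at the \emph{endpoints} of $A$ and $B$: at each endpoint the supporting cones of $K$ and $L$ either coincide (allowing $U$ to be extended) or are reflections of one another, and you then assert that the reflection dichotomy at the two ends ``translates into one of the stated alternatives.'' But local point-reflection symmetry at both endpoint pairs does not imply that $A$ is globally a reflection of $B$: the two reflections have different centers $c_1=(a_1+b_1)/2$ and $c_2=(a_2+b_2)/2$ in general, and even a single such symmetry can break somewhere in the interior of the arcs. The paper's proof handles this by introducing, for each $i$, the \emph{maximal} subarc $A_i'$ of $A$ starting at $a_i$ whose reflection in $c_i$ lies in $B$ (with $B_i'$ its image), and then splitting into cases according to whether $A_i'=A$ or $B_i'=B$. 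The covariogram/triangle argument is applied not at the endpoints of $A$ and $B$ but at the interior breakpoints $a_1',b_1'$ where the reflection symmetry first fails (Case 2 of the paper), using the edges $E,E',E'',F$ leaving $A_1'$ and $B_1'$ there; this is what rules out a partial symmetry. Your proposal never identifies these breakpoints, so the engine of the proof has nothing to bite on.

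The remaining alternatives also do not fall out of endpoint data the way you suggest. In the paper, the conclusion that $A$ and $B$ are parallel line segments arises from the purely combinatorial subcase $A_1'=A=A_2'$ with $B_1'\neq B\neq B_2'$: then $B_1'$ and $B_2'$ are two distinct translates of each other contained in $B$, each sharing an endpoint with $B$, which forces $B$ (and hence $A$) to be a segment. The subcase $A_1'=A\neq A_2'$, $B_1'\neq B=B_2'$ is excluded because it would place $A$ properly inside a translate of itself. Neither of these arguments is a statement about ``edges at $p_i$ and $q_i$ being parallel,'' and neither contradicts the maximality of $U$; the maximality of $U$ is used only once, to show that the endpoint directions $u_i$ lie in the normal cones of both $K$ and $L$ at $a_i$ (and $-u_i$ at $b_i$). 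So while your opening moves (endpoint normals via maximality of $U$, the triangle construction via Lemma~\ref{lem:k-exists}) are the right ingredients, the decisive structural device --- the reflection-maximal subarcs and the trichotomy they generate --- is missing, and the proof does not go through without it.
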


\begin{proof} Put $m:=m(K)$ and $\delta:=\delta(K)$. We follow the lines of the proof of~\cite[Lemma 4.1]{Bianchi02}, but replace a `continuous' argument that makes use of derivatives with a discrete one. First, observe that $[K]\neq [L]$ implies that neither $A$ nor $B$ coincides with $\bd\overline{K}$. Observe that $U$ is closed.

 If $|A|=1$ or $|B|=1$ we are done. So assume that both $A$ and $B$ contain more than one point. Let $a_1$, $a_2$ be the endpoints of $A$, and let $b_1$, $b_2$ be the endpoints of $B$. Choose the labeling in such a way that $a_1$, $a_2$, $b_1$, and $b_2$ are in counterclockwise order on $\bd\overline{K}$. Further on, let $u_1$, $u_2$ be the first and second endpoint of $U$ in counterclockwise order, respectively. We claim that
 \begin{align}
 \label{eq:u1}
 u_1&\in N(K,a_1)\cap N(L,a_1), & -u_1&\in N(K,b_1)\cap N(L,b_1),\\
 \label{eq:u2}
 u_2&\in N(K,a_2)\cap N(L,a_2), & -u_2&\in N(K,b_2)\cap N(L,b_2).
 \end{align}
Thus, we need to show $u_i \in N(K,a_i)$, $u_i \in N(L,a_i)$, $-u_i \in N(K,b_i)$, $-u_i \in N(L,b_i)$ for $i \in \{1,2\}$. We prove $u_1\in N(K,a_1)$ by contradiction; the remaining assertions can be settled in complete analogy. By construction we have $F(K,u_1)\cap A\neq \emptyset$. Let $U'$ be the set of vectors $u'\in\mathbb{S}^1$ which, in counterclockwise order, strictly precede $u_1$ and strictly follow every $v\in N(K,a_1)$. If $a_1\notin F(K,u_1)$, then $U'$ is nonempty. For $u'\in U'\cup\{u_1\}$ we have $u'\notin N(K,a_1)$ and for $u'\in U'$ the set $F(K,u')$ is contained in the relative interior of $A$. By definition of $A$ and due to \eqref{eq:edges-up-to-flips} we have $F(K,-u')=F(L,-u')$ for each $u' \in U' \cup \{u_1\}$. This contradicts the maximality of $U$ and concludes the proof of the claim.

 Now, for $i\in\{1,2\}$, let $c_i=(a_i+b_i)/2$ and let $A'_i$ be the maximal subarc of $A$ that starts in $a_i$ and in which each point is also present in $B$ after reflection in $c_i$. It is possible that $A'_i$ contains just one point, in which case we say that $A'_i$ is \emph{degenerate}. Further on, let $B'_i$ be the reflection of $A'_i$ in $c_i$. By construction we have $a_i\in A'_i$ and $b_i\in B'_i$. We also observe that $a_i$ and $b_i$ are vertices of $K$ or $L$ for $i\in\{1,2\}$, but if parallel edges of different lengths exist in $K$ and $L$, they do not have to be vertices of both $K$ and $L$.

 \emph{Case 1: For each $i\in\{1,2\}$ we have $A'_i=A$ or $B'_i=B$.} If for some $i\in\{1,2\}$ we have $A'_i=A$ and $B'_i=B$, then $A$ is a reflection of $B$ and we are done. So assume that for each $i\in\{1,2\}$ we \emph{either} have $A'_i=A$ \emph{or} $B'_i=B$. Then, in view of the symmetries arising from the exchanging of $A$ and $B$ and/or replacing $i$ by $3-i$, it is sufficient to distinguish the following subcases:

 \emph{Case 1.1: $A'_1 = A \ne A'_2$ and $B'_1 \ne B = B'_2$.}

 \emph{Case 1.2: $A'_1 = A = A'_2$ and $B'_1 \ne B \ne B'_2$.}

 In Case 1.1 we have
 \begin{equation*}
 A=A'_1 = 2 c_1 - B'_1 \varsubsetneq 2 c_1 - B = 2 c_1 - B'_2 = 2 c_1 - 2 c_2 + A'_2 \varsubsetneq 2 c_1 - 2 c_2 + A.
\end{equation*}
But this implies that $A$ is properly contained in a translation of $A$, which yields a contradiction. In Case~1.2, $B'_1$ and $B'_2$ are two distinct translations of each other contained in $B$ and each sharing an endpoint with $B$. This is only possible if $B'_1, B'_2$ and $B$ are parallel line segments.

\emph{Case 2: There exists $i\in\{1,2\}$ such that $A'_i\neq A$ and $B'_i\neq B$.} We prove that this case cannot occur by showing that it implies $g_K\neq g_{L}$, a contradiction. We show the arguments for $i=1$; the case $i=2$ can be proved in complete analogy. We refer to Figure~\ref{fig:Bianchi-main} for an illustration of the following arguments.

\begin{figure}
 \centering
 \tikzhide{\tikzBianchiMain}%
 \pdfhide{\includegraphics{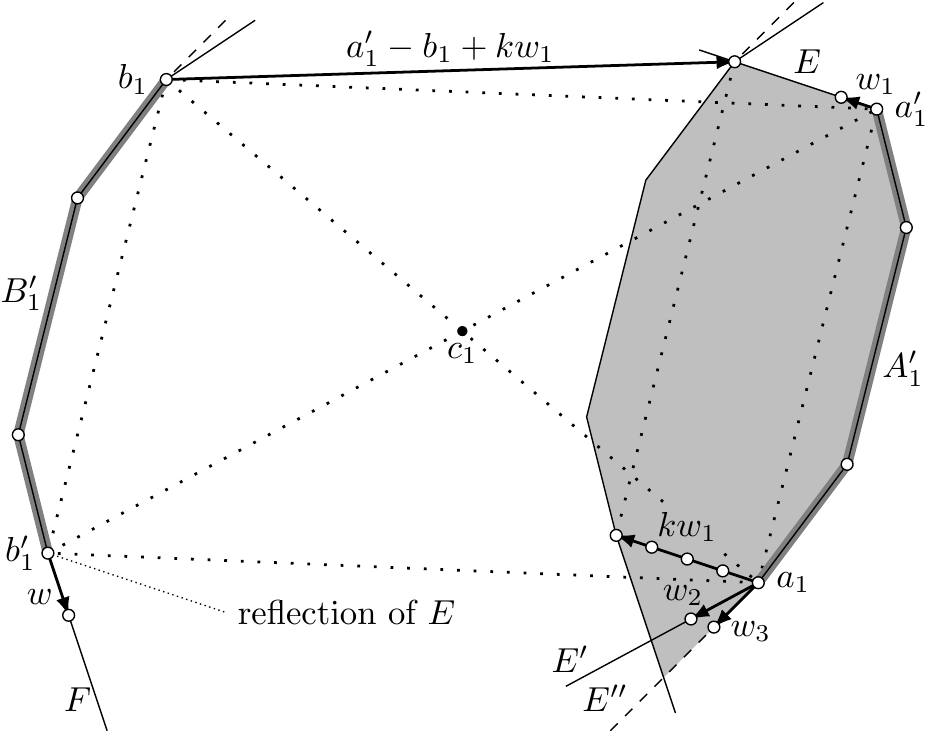}}
 \parbox[t]{0.9\textwidth}{%
 \caption{Illustration of the proof of Lemma~\ref{lem:Bianchi-main}, Case 2. The relevant parts of the boundaries of $\overline{K}$ and $\overline{L}$ are drawn with solid and dashed lines, respectively. In the figure, the arcs $A_1'$ and $B_1'$ are nondegenerate; collapsing these arcs to points, one obtains an illustration for the degenerate case.}
 \label{fig:Bianchi-main}}
 \end{figure}%

Let $a'_1$ and $b'_1$ be chosen so that $\{a_1,a'_1\}$ is the set of endpoints of $A'_1$ and $\{b_1,b'_1\}$ is the set of endpoints of $B'_1$. (If $A'_1$ and thus $B'_1$ is degenerate, then $a_1=a'_1$ and $b_1=b'_1$.) The point $a'_1$ or $b'_1$ is a vertex of both $K$ and $L$, but not necessarily both of them. (Again, the latter can happen if $K$ and $L$ have parallel edges of different lengths.) But there is an edge $E$ of $\overline{K}$ or $\overline{L}$ that contains $a'_1$, that is not completely contained in $A'_1$, and that is a subset of both $\bd\overline{K}$ and $\bd\overline{L}$. Similarly, there is an edge $F$ of $\overline{K}$ or $\overline{L}$ that contains $b'_1$, that is not completely contained in $B'_1$, and that is a subset of both $\bd\overline{K}$ and $\bd\overline{L}$. By the choice of $A'_1$, the edges $E$ and $F$ are not parallel. Observe that, by the definition of $m$, the sets $E\setminus A'_1$ and $F\setminus B'_1$ contain at least $m-1$ lattice points.

By definition of $U$ there exist nonparallel edges $E'$ and $E''$ of $K$ and $L$, respectively, that contain $a_1$ (not necessarily as a vertex) and are not completely contained in $A'_1$. By the definition of $m$, the sets $E'\setminus A'_1$ and $E'\setminus A'_1$ contain at least $m-1$ lattice points.

Choose $w_1,w_2,w_3,w\in\integer^2$ such that $\gcd(w_i)=1$ for each $i\in\{1,\ldots,4\}$ and such the following conditions are fulfilled:
\begin{itemize}
\item $w_1$, $w_2$, $w_3$, $w$ are parallel to $E$, $E'$, $E''$,  $F$,
 respectively,
\item $a'_1+w_1\in \bd \overline{K}\setminus A_1'$, $a_1+w_2\in \bd \overline{K}\setminus A_1'$, $a_1+w_3\in \bd \overline{L}\setminus A_1'$, $b'_1+w\in \bd \overline{K}\setminus B_1'$.
\end{itemize}
	After possibly exchanging $U$ with $-U$ and/or interchanging the roles of $K$ and $L$, the following inequalities are ensured:\footnote{Recall that, for $x,y\in\real^2$, one has $\det(x,y)>0$ if and only if $(x,y)$ is a positively oriented basis of $\real^2$.}
	\begin{align}
		\det(w_1,w) & > 0, \label{eq:det:w_1:2} \\
		\det(w_1,w_i) & > 0 \quad (i \in \{2,3\}),  \label{eq:det:w_1:w_i} \\
		\det(w_i,w) & > 0 \quad (i \in \{2,3\}), \label{eq:det:w_i:w} \\
		\det(w_2,w_3) & > 0. \label{eq:det:w_2:w_3}
	\end{align}

	This is shown as follows.  By assumption for Case~2 we have $\det(w_1,w) \ne 0$. Thus, possibly interchanging the roles of $U$ and $-U$ (and by this also the roles of $A$ and $B$) we achieve $\det(w_1,w)>0$. By construction $\det(w_1,w_i) \ge 0$ for each $i \in \{1,2\}$, and these inequalities are even strict. In fact, if we assume that, say, for $i=2$ one has $\det(w_1,w_i) = 0$, then $w_1=w_2$ and $\overline{K}= \overline{A_1' \cup B_1'}$. Hence $\overline{K}$ is centrally symmetric and using Lemma~\ref{lem:edges-up-to-flips} we deduce $K=L$, a contradiction.  Let us show the nonstrict inequalities $\det(w_i,w) \ge 0$ for $i \in \{1,2\}$. If $A_1'$ is nondegenerate, the inequalities follow directly by construction (taking into account that $a_1,a_1',b_1,b_1'$ lie in counterclockwise order on $\bd \overline{K}$). If $A_1'$ is degenerate, then $b_1 \in F$.  Hence, using $a_1 \in E'\cap E''$, the relations in~\eqref{eq:u1}, and the definition of the vectors $w, w_2, w_3$, we see that $\sprod{u_1}{w_i} \le 0$ for each $i \in \{2,3\}$ and $\sprod{u_1}{w} \ge 0$. Taking into account the fact that $a_1,a_2,b_1,b_2$ are in counterclockwise order on $\bd \overline{K}$ we obtain the inequalities $\det(w_i,w) \ge 0$ for $i \in \{2,3\}$. The inequalities are strict. In fact, if we assume that, say, for $i=2$ one has $\det(w_2,w)=0$, then $w$ is parallel to $a_1-b_1'$. The latter implies $\det(w_1,w) \le 0$, a contradiction to $\det(w_1,w) > 0$. Possibly interchanging the roles of $K$ and $L$ we can assume that $\det(w_2,w_3)>0$, which can also be stated as $w_3\notin S(K,a_1)$. 

	By Lemma~\ref{lem:k-exists} there exists $k\in\natur$ with $k\leq m-1$ and there exist real numbers $ \alpha_2,\alpha_3,\beta_2,\beta_3$ such that $k w_1 = \alpha_i w_i - \beta_i w$ and $1\leq |\alpha_1| \leq m-1$ and $1\leq |\beta_i| \leq m-1$ for each $i\in\{2,3\}$. By Cramer's rule and \eqref{eq:det:w_1:2}--\eqref{eq:det:w_i:w}, we see that both $\alpha_i$ and $\beta_i$ are positive for each $i \in \{1,2\}$. By construction $a_1' + [o,k] w_1 \in \bd \overline{K} \cap \bd \overline{L}$. Due to convexity of $\overline{K}$ and $\overline{L}$ we also have 
\[a_1 + [0,k] w_1 \in \overline{\{b_1,b_1',a_1,a_1',a_1'+k w_1 \}} \subseteq \overline{K} \cap \overline{L}.\]
 Using the definition of $m$ we conclude that the triangle with vertices $a_1,a_1+k w_1,a_1 + \alpha_2 w_2$ and the triangle with vertices $a_1,a_1+k w_1, a_1 + \alpha_3 w_3$ are both contained in $\overline{K} \cap \overline{L}$. Now, using $\alpha_3 \ge 1$ and~\eqref{eq:det:w_2:w_3}, we have 
\[a_1+ w_3
\notin 
\overline{K}\cap(\overline{K}+a'_1-b_1+k w_1)
\varsubsetneq
\overline{L}\cap(\overline{L}+a'_1-b_1+k w_1)
\ni
a_1+ w_3,\]
giving $g_K(a'_1-b_1+k w_1)<g_L(a'_1-b_1+k w_1)$, a contradiction.
\end{proof}

\begin{remark}
 The argument of Lemma~\ref{lem:Bianchi-main} can be re-translated for the case of covariograms of convex polygons. This way one can establish a more discrete version of the proof of \cite[Lemma 4.1]{Bianchi02} and, by this, of the main result in \cite{Bianchi02}.
\end{remark}

 \begin{proof}[Proof of Theorem~\ref{thm:main-thm}]
This proof is essentially the same as the proof of \cite[Theorem 1.1]{Bianchi02}. We repeat it here as a service to the reader.

Let $K, L \in \LCONV{2}$ be sets with $g_K=g_L$ and put $m:=m(K)$ and $\delta:=\delta(K)$. It remains to show that if $m \ge \delta^2+\delta+1$, then $[K]=[L]$. To this end assume $[K]\neq [L]$. We will prove that $K$ and $L$ are centrally symmetric. This implies that $\overline{K}$ is a translation of $\frac{1}{2} \overline{DK}$; the same is true for $L$ since $\overline{DK}=\overline{DL}$. This yields the result that $K$ equals $L$ up to translations, a contradiction. 

To prove central symmetry of $K$ and $L$, let $v$ and $v'$ be opposite vertices of $K$, i.e.,
\[\intr N(K,v)\cap (-\intr N(K,v'))\neq \emptyset.\]
By Lemma~\ref{lem:all-cones} there exists $L'\in[L]$ such that $v$ and $v'$ are vertices of $L'$, $N(K,v)=N(L',v)$, and $N(K,v')=N(L',v')$. Now we apply Lemma~\ref{lem:Bianchi-main} with $U$ taken so that all the normalized vectors in $N(K,v)$ are in $U$. Let $A$ and $B$ be defined as in the statement of Lemma~\ref{lem:Bianchi-main}. Observe that $A$ and $B$ both are neither singletons nor line segments, because $\bd\overline{K}$ and $\bd\overline{L'}$ coincide in a neighborhood of both the vertices $v$ and $v'$. So Lemma~\ref{lem:Bianchi-main} implies that $A$ is a reflection of $B$.

We have shown that the normal cones of opposite edges of $K$ and $L$ are reflections of each other. Consequently, each edge of $K$ (resp.\ $L$) is parallel to another edge of $K$ (resp.\ $L$). It remains to show that all pairs of parallel edges of $K$ (and thus $L$) have equal length to prove that $K$ and $L$ are centrally symmetric. So let $E=\overline{\{v_1,v_2\}}$ and $F=\overline{\{v_1',v_2'\}}$ be two parallel edges of $K$, where $v_1$, $v_2$, $v_1'$, and $v_2'$ are in counterclockwise order of $\bd\overline{K}$. By Lemma~\ref{lem:edges-up-to-flips} we can choose $L'\in [L]$ such that $E$ and $F$ are also edges of $L'$. Now both pairs $v_1,v_1'$ and $v_2,v_2'$ are opposite vertices of $K$ as well as of $L'$. This gives
\begin{align*}
 N(K,v_1)&=N(L',v_1)=-N(K,v_1')=-N(L',v_1'),\\
 N(K,v_2)&=N(L',v_2)=-N(K,v_2')=-N(L',v_2').
\end{align*}
So the boundaries of $\overline{K}$ and $\overline{L}$ coincide also in a neighborhood of $E$ and $F$. Lemma~\ref{lem:Bianchi-main} then shows that $E$ must be a reflection of $F$, and so they have the same length.
 \end{proof}

\begin{remark} \label{rem:higher-dimensions}
	Extending Theorem~\ref{thm:main-thm} to higher dimensions seems to be a nontrivial task. Recent results for the continuous covariogram problem such as those presented in \cite{Bianchi09} or \cite{Bianchi-Gardner-Kiderlen-2011} suggest that an analogue of Theorem~\ref{thm:main-thm} for higher dimensions might exist.  However, at least for dimension $d \ge 4$, no straightforward generalization of Theorem~\ref{thm:main-thm} seems to be possible. In fact, for $d \ge 4$ one can borrow the construction of \cite[Theorem~1.2]{Bianchi05} to obtain a vast class of nontrivially homometric pairs of lattice convex sets: If $K \subseteq \integer^{\ell}$ and $L \subseteq \integer^m$ ($\ell, m \ge 2$) are both lattice-convex and not centrally symmetric, then the lattice convex sets $K \times L$ and $K \times (-L)$ in $\integer^{\ell + m}$ form a nontrivially homometric pair. So an essentially different approach seems to be needed to cope with higher dimensions.
\end{remark}

\section{Proof of Theorem~\ref{thm:homometry:from:lat:wid:one}}\label{sec:HomometricPairs}

For the remainder of this section we assume that the situation of Theorem~\ref{thm:homometry:from:lat:wid:one} is given, that is, $k$ and $\ell$ are integers with $k>\ell\geq 0$, and $T=T_1 \cup T_2$ with
\begin{align*}
 T_1 &:= \{0,\ldots,k\} \times \{0\}, & T_2 &:= \{0,\ldots,\ell\} \times \{1\}.
\end{align*}
Further we fix the vectors $w_1 := (-k-1,1)$, $w_2 := (\ell + 1, 1)$ and the lattice $\bL := \integer w_1 + \integer w_2$. 

\begin{lemma} \label{lattice:T:decomposition}
	With $\bL$ and $T$ defined as above one has $\integer^2 = \bL \oplus T$.
\end{lemma}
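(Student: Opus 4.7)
The plan is to identify $T$ as a complete and unambiguous set of coset representatives for $\integer^2/\bL$. Concretely, I will establish two claims: (a) the index $[\integer^2 : \bL]$ coincides with the cardinality of $T$, and (b) no two distinct elements of $T$ are congruent modulo $\bL$. Together these give a bijection between $T$ and $\integer^2/\bL$, which is exactly the assertion $\integer^2 = \bL \oplus T$.

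For (a), observe that $|\det(w_1,w_2)| = |(-k-1)\cdot 1 - 1 \cdot (\ell+1)| = k+\ell+2$, so $[\integer^2:\bL] = k+\ell+2$. On the other hand $|T| = |T_1| + |T_2| = (k+1)+(\ell+1) = k+\ell+2$, matching the index.

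For (b), suppose $t,t' \in T$ satisfy $t-t' = a w_1 + b w_2$ for some $a,b\in\integer$. Since the second coordinates of $w_1$ and $w_2$ are both $1$, while the second coordinate of $t-t'$ lies in $\{-1,0,1\}$, we get $a+b \in \{-1,0,1\}$. I will split into three cases. If $a+b=0$, then $t-t' = a(w_1-w_2) = (-a(k+\ell+2),0)$, whose first coordinate has absolute value a multiple of $k+\ell+2$; since the first coordinates of $t$ and $t'$ lie in $\{0,\dots,k\}$, the difference is bounded by $k<k+\ell+2$ in absolute value, forcing $a=0$ and hence $t=t'$. If $a+b=1$, then $t\in T_2,\,t'\in T_1$, and the first coordinate of $aw_1+bw_2$ simplifies to $\ell+1-a(k+\ell+2)$; checking $a\in\{\ldots,-1,0,1,2,\ldots\}$ shows this value never lands in the admissible range $\{-k,\dots,\ell\}$, a contradiction. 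The case $a+b=-1$ is symmetric (exchange the roles of $t$ and $t'$).

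The main obstacle (though modest) is the case analysis in part (b): the interaction between the two horizontal strips $T_1$ and $T_2$ must be ruled out by a careful bounding argument on the first coordinate of $aw_1+bw_2$, using that the horizontal extents of $T_1$ and $T_2$ are strictly smaller than the common horizontal period $k+\ell+2$ induced by $w_1-w_2$. Once (a) and (b) are in place, the injection $T\hookrightarrow \integer^2/\bL$ is surjective for cardinality reasons, and uniqueness of the decomposition $\integer^2 = \bL + T$ follows, which is the definition of $\bL\oplus T$.
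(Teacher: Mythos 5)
Your proof is correct and takes essentially the same route as the paper's: both verify that distinct elements of $T$ are incongruent modulo $\bL$ (equivalently, $(T-T)\cap\bL=\{o\}$, which is the directness of the sum) and then conclude by comparing $|T|=k+\ell+2$ with the index $[\integer^2:\bL]=|\det(w_1,w_2)|$. Your case analysis on $a+b$ merely spells out in detail the verification that the paper disposes of by listing $DT$ explicitly and checking $(DT)\cap\bL=\{o\}$ directly.
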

\begin{proof}
	It can be verified directly that the sum of $T$ and $\bL$ is direct if and only if $(DT) \cap \bL = \{o\}$. We have 
	\[ 
		DT = \{-k,\ldots,k\} \times \{0\} \cup \{-k,\ldots,\ell\} \times \{1\} \cup \{-\ell,\ldots,k\} \times \{-1\}.
	\]
	Thus, we see that $(DT) \cap \bL = \{o\}$ is in fact fulfilled. For showing $\integer^2 = \bL \oplus T$ we compare $\integer^2$ and $\bL \oplus T$ modulo $\bL$. We have 
	\begin{align*}
		\card{(T \oplus \bL) / \bL } = \card{T} = k+\ell+2 = |\det(w_1,w_2)| = \det \bL = \card{\integer^2 / \bL}.
	\end{align*}
	The above equality $\card{(T \oplus \bL) / \bL} = \card{T}$ holds since the sum of $T$ and $\bL$ is direct, and the equality $\det \bL = \card{\integer^2/ \bL}$ is a standard fact. Taking into account the inclusion $(T \oplus \bL) / \bL \subseteq \integer^2 / \bL$, we obtain $(T \oplus \bL) / \bL = \integer^2 / \bL$. The latter implies $T \oplus \bL = \integer^2$.
\end{proof}

In this section we give a proof of Theorem~\ref{thm:homometry:from:lat:wid:one}. We shall use several standard graph-theoretic notions (see, e.g., \cite{MR2159259} for a comprehensive account on graph theory). The graphs we use will all be undirected and in some cases infinite. As usual, $V(G)$ stands for the node set of a graph $G$. Under a \term{grid graph} we shall understand a Cartesian product of two graph-theoretic paths (where a graph-theoretic path is allowed to be finite, infinite in one direction, or infinite in two directions).
Given a set $S \subseteq \integer^2$ we define an undirected graph $G_S$ as follows. We declare two nodes $s,s' \in S$ to be adjacent if and only if $s-s' \in \{\pm w_1, \pm w_2\}$ for $k - \ell >1$ and if and only if $s-s' \in \{\pm w_1, \pm w_2, \pm (w_1+w_2)\}$ for $k- \ell =1$. Thus, for $k- \ell >1$, $G_{\bL}$ is an infinite grid graph and $G_S$ is the subgraph of $G_{\bL}$ induced by the node set $S$ (where $S \subseteq \bL$).

Let us first give a sketch of the proof of Theorem~\ref{thm:homometry:from:lat:wid:one}. The lengthy part of the proof is to provide enough auxiliary information for the implication \eqref{item:direct-and-convex} $\Rightarrow$ \eqref{item:S-is-special} of the theorem. This is done in a a number of lemmas. In a first step we prove that~\eqref{item:direct-and-convex} implies that the graph $G_S$ is connected (Lemma~\ref{lem:G_S-is-connected}). Having shown this, we derive that the lattice-convexity of $S \oplus T$ implies certain combinatorial conditions on $G_S$ (namely, the presence of certain pairs in $S$ force `neighboring' pairs to be also elements of $S$; see Lemma~\ref{lem:char-grid-graph} and Lemma~\ref{lem:char-quasi-grid}). These conditions together with the connectedness of $G_S$ imply~\eqref{item:S-is-special}. From this we can establish \eqref{item:direct-and-convex} $\Rightarrow$ \eqref{item:S-is-special}; the reverse implication will follow quite straightforwardly.

When dealing with a set $S \subseteq \integer^2$ such that the sum of $S$ and $T$ is direct and $K:= S \oplus T$ is lattice-convex we introduce the following notations. For $i\in\{1,2\}$ by $\cT_i$ we denote the set of all $s+T_i$ with $s \in S$. We set $\cT:=\cT_1 \cup \cT_2$. Then $K$ is a disjoint union of the elements of $\cT$. For $h \in \integer$, the set $I_h:=(\integer \times \{h\}) \cap K$ is the horizontal section of $K$ at the height $h$. The set $I_h$ is empty or a (probably infinite) interval of integer points. In the latter case it decomposes into the disjoint union of the elements of $\cT(h):= \setcond{J \in \cT}{J \subseteq I_h}$. We introduce a consecutive order on $\cT(h)$ by ordering elements from left to right. In particular, for distinct $J_1, J_2 \in \cT(h)$, $J_1$ strictly precedes $J_2$ if $J_1 \cup J_2$ is lattice-convex and the first coordinates of the points of $J_1$ are smaller than the first coordinates of the points of $J_2$.

\begin{lemma} \label{lem:alternate}
 Let $S \subseteq \integer^2$ be such that the sum of $S$ and $T$ is direct and lattice-convex. Then for each $h \in \integer$, the elements of $\cT_1$ and $\cT_2$ alternate in $\cT(h)$ with respect to the consecutive order of $\cT(h)$.
\end{lemma}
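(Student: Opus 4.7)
The plan is to argue by contradiction: suppose that for some $h \in \integer$ two consecutive elements $J_1, J_2 \in \cT(h)$ belong to the same family $\cT_i$. Because $\cT(h)$ partitions the integer interval $I_h$ into contiguous blocks ordered from left to right, any lattice point of $I_h$ strictly between $J_1$ and $J_2$ would be contained in some $J \in \cT(h)$ strictly between them, contradicting consecutiveness. Hence $J_1$ and $J_2$ must abut, and writing $J_1 = s + T_i$, $J_2 = s' + T_i$ with $s, s' \in S$, this yields $s' - s = (k+1, 0)$ in the $\cT_1 \cT_1$ case and $s' - s = (\ell+1, 0)$ in the $\cT_2 \cT_2$ case.

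The $\cT_2 \cT_2$ case is immediate: since $k > \ell$ one has $\ell + 1 \le k$, so $(\ell+1, 0)$ lies in $\{-k, \ldots, k\} \times \{0\} \subseteq DT$; combined with $s \ne s'$ this directly contradicts directness of $S + T$.

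The $\cT_1 \cT_1$ case is the main obstacle, since $(k+1, 0) \notin DT$, so no immediate contradiction with directness arises. The strategy is to exploit lattice-convexity of $K$ to force the existence of an unavoidable third element of $S$, which can then be ruled out. Writing $s = (a, h)$, the translates $s + T_2$ and $s' + T_2$ both lie in $K$, so the points $(a, h+1)$ and $(a + k + 1 + \ell, h+1)$ belong to $K$; lattice-convexity then forces every intermediate lattice point on this horizontal line into $K$, and in particular $p := (a + \ell + 1, h+1) \in K$. Since $K = S \oplus T$, there is a unique pair $(s'', t) \in S \times T$ with $s'' + t = p$, and a quick check using $T_2 = \{0,\ldots,\ell\} \times \{1\}$ rules out $s'' \in \{s, s'\}$.

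A case analysis in $t$ then completes the argument. If $t = (j,0) \in T_1$, then $s'' = (a + \ell + 1 - j, h+1)$ lies at height $h+1$; requiring $s'' - s \notin DT$ and $s'' - s' \notin DT$ against the second row $\{-k,\ldots,\ell\}\times\{1\}$ of $DT$ forces $j = 0$ and $j \ge \ell+1$ respectively, which are incompatible. If $t = (j,1) \in T_2$, then $s'' = (a + \ell + 1 - j, h)$ lies at height $h$ with first coordinate differing from $a$ by a value in $\{1, \ldots, \ell+1\} \subseteq \{-k, \ldots, k\}$, so $s'' - s \in DT$ while $s'' \ne s$. In either subcase the directness of $S + T$ is violated, completing the contradiction and thereby the proof.
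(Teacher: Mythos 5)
Your proof is correct. The reduction to abutting blocks and the $\cT_2\cT_2$ case are exactly as in the paper (there, too, two consecutive $\cT_2$-translates force $s'-s=(\ell+1,0)\in DT$, contradicting directness). The two arguments diverge only in how the $\cT_1\cT_1$ case is finished. The paper observes that lattice-convexity fills the gap of exactly $k-\ell$ points between $s+T_2$ and $s'+T_2$ in $I_{h+1}$, invokes the already-established fact that no two $\cT_2$-blocks are consecutive to force a $\cT_1$-block into that gap, and concludes by the cardinality mismatch $k-\ell\ge k+1$. You instead single out the specific gap point $p=(a+\ell+1,h+1)$, write $p=s''+t$, and rule out both possibilities for $t$ by explicit membership tests against the rows of $DT$; your subcase $t\in T_2$ is in substance the same use of the $\cT_2\cT_2$ exclusion, while your subcase $t\in T_1$ replaces the paper's pigeonhole on block lengths by the incompatible constraints $j=0$ and $j\ge\ell+1$. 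Both finishes are valid; the paper's is slightly slicker, yours is more explicitly computational but checks out in every detail (including the verification that $s''\notin\{s,s'\}$, which is needed before invoking directness).
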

\begin{proof} We show by contradiction that neither two elements of $\cT_1$ nor two elements of $\cT_2$ can be consecutive in $\cT(h)$. Let $s, s' \in S$.

If the set $s + T_2$ strictly precedes $s' + T_2$ in $\cT(h)$, then $(s+T) \cap (s'+T) \ne \emptyset$, a contradiction to the fact that the sum of $S$ and $T$ is direct. Thus, for each $h \in \integer$ the set $\cT(h)$ does not contain two consecutive elements of $\cT_2$.

If $s + T_1$ strictly precedes $s'+T_1$, then there are precisely $k-\ell$ integer points between $s+T_2$ and $s' + T_2$ within $I_{h+1}$. Since no two elements of $\cT_2$ are consecutive in a horizontal section of $K$, we see that there must be an element of $\cT_1$ between $s+T_2$ and $s'+T_2$. Elements of $\cT_1$ have cardinality $k+1$, hence $k- \ell \ge k+1$, a contradiction to $\ell \ge 0$.
\end{proof}

\begin{lemma} \label{lem:consecutive-short-sections}
 Let $S \subseteq \integer^2$ be such that the sum of $S$ and $T$ is direct and lattice-convex. Let $h \in \integer$ be such that $\cT(h)$ consists of precisely one element $s+T_2$ with $s \in S$ and $\cT(h+1)$ consists of precisely one element $s'+T_1$ with $s' \in S$. Then $k-\ell=1$ and $s'-s = (-1,2) = w_1+w_2$.
\end{lemma}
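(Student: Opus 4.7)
The plan is to extract the boundary data of $\overline{K}$ at heights $h-1$, $h$, and $h+1$, and then apply convexity to pinch $s'$ from above and below simultaneously. First I write $s = (s_1, h-1)$ (forced since $s + T_2$ lies at height $h$) and $s' = (s_1', h+1)$. Let $L(t)$ and $R(t)$ denote the left and right endpoints of the horizontal section of $\overline{K}$ at height $t$. Since $\overline{K}$ is convex, $L$ is a convex function and $R$ is a concave function of $t$.

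The key observation is to read off the needed inequalities. From $\cT(h)=\{s+T_2\}$ one has $I_h = \{s_1,\ldots,s_1+\ell\}\times\{h\}$; since $K=\overline{K}\cap\integer^2$, the missing lattice points $(s_1-1,h)$ and $(s_1+\ell+1,h)$ give the \emph{strict} bounds $L(h) > s_1 - 1$ and $R(h) < s_1+\ell+1$. From $\cT(h+1)=\{s'+T_1\}$ one has $I_{h+1}= \{s_1',\ldots,s_1'+k\}\times\{h+1\}$, which yields $L(h+1)\le s_1'$ and $R(h+1)\ge s_1'+k$. Finally, $s+T_1\subseteq K$ places $(s_1,h-1)$ and $(s_1+k,h-1)$ in $\overline K$, giving $L(h-1)\le s_1$ and $R(h-1)\ge s_1+k$.

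Next I apply the midpoint convexity/concavity inequality at $h$ with the triple $(h-1,h,h+1)$. Convexity of $L$ gives $2L(h) \le L(h-1)+L(h+1) \le s_1 + s_1'$, which combined with $L(h) > s_1 - 1$ yields $s_1' > s_1 - 2$, hence $s_1' \ge s_1-1$. Concavity of $R$ gives $2R(h) \ge R(h-1)+R(h+1) \ge (s_1+k)+(s_1'+k) = s_1 + s_1' + 2k$, which combined with $R(h) < s_1+\ell+1$ yields $s_1' < s_1 - 2(k-\ell) + 2$, hence $s_1' \le s_1 - 2(k-\ell) + 1$.

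Combining the two bounds forces $s_1 - 1 \le s_1 - 2(k-\ell)+1$, i.e.\ $k-\ell\le 1$; together with the hypothesis $k > \ell$ this gives $k-\ell=1$, and the pinch then collapses to $s_1' = s_1 - 1$. Consequently $s'-s = (-1, 2)$, which equals $w_1+w_2 = (\ell-k, 2) = (-1, 2)$ in the regime $k-\ell=1$, as claimed. The main subtlety is securing the \emph{strict} inequalities $L(h) > s_1 - 1$ and $R(h) < s_1 + \ell + 1$, which rest on the lattice-convexity identity $K = \overline K\cap\integer^2$ together with closedness of $\overline K$; without strictness one loses the required room to derive a contradiction when $k-\ell\ge 2$. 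Once strictness is in place, the proof is two mechanical applications of midpoint convexity.
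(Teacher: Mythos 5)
Your overall strategy --- squeezing the height-$h$ section of $\overline{K}$ between the rows $I_{h-1}\supseteq s+T_1$ and $I_{h+1}=s'+T_1$ by convexity --- is sound and is essentially what the paper does: the paper takes the parallelogram $P=\overline{(s+T_1)\cup(s'+T_1)}$, observes that its length-$k$ section at height $h$ contains at least $k$ lattice points all forced into $I_h$ to get $k\le \ell+1$, and then locates $s'$ by a separate small argument with the points $s+w_2$ and $s+(-1,1)$; your profile-function computation delivers both conclusions in one pass. The numerical part is correct: from $L(h)>s_1-1$, $R(h)<s_1+\ell+1$ and the four non-strict bounds at heights $h\pm 1$, midpoint convexity/concavity gives $s_1-1\le s_1'\le s_1-2(k-\ell)+1$, hence $k-\ell=1$ and $s_1'=s_1-1$.

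The one step that is not justified as written is exactly the one you flag as the main subtlety: the strict inequalities $L(h)>s_1-1$ and $R(h)<s_1+\ell+1$. You derive them from ``$(s_1-1,h)\notin\overline{K}$ together with closedness of $\overline{K}$''. But the lemma does not assume $S$ finite (and it is applied, via Lemma~\ref{lem:G_S-is-connected}, in situations where $S$ may be infinite), and the convex hull of an infinite subset of $\integer^2$ need not be closed; the infimum defining $L(h)$ need then not be attained, so $(s_1-1,h)\notin\overline{K}$ only yields $L(h)\ge s_1-1$. With the non-strict bounds your computation only gives $k-\ell\le 2$, which is not enough. The repair is easy and makes the argument cleaner: instead of $L(h-1)+L(h+1)$, use the concrete midpoint $\bigl(\tfrac{s_1+s_1'}{2},h\bigr)$ of the lattice points $(s_1,h-1)\in s+T_1$ and $(s_1',h+1)\in s'+T_1$, which lies in $\overline{K}$; if $\tfrac{s_1+s_1'}{2}\le s_1-1$, the segment from this midpoint to $(s_1,h)$ contains the lattice point $(s_1-1,h)$, which would then lie in $\overline{K}\cap\integer^2=K$, contradicting $I_h=s+T_2$. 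The analogous argument with $(s_1+k,h-1)$ and $(s_1'+k,h+1)$ gives $\tfrac{s_1+s_1'}{2}+k<s_1+\ell+1$, and your two inequalities follow with the required strictness, integrality doing the work that closedness cannot.
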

\begin{proof}
Let $P$ be the convex hull of $(s + T_1) \cup (s'+T_1)$. Then $P$ is a parallelogram with horizontal edges of length $k$. Therefore, the section of $P$ by $\real \times \{h\}$ is also of length $k$; see Figure~\ref{fig:pairs}(a) for an illustration. This section then contains at least $k$ integer points. By lattice-convexity of $S \oplus T$, all those points belong to $S \oplus T$. By the assumption, $I_h$ contains precisely $\ell+1$ points. Hence $\ell + 1 \ge k$. Since we assume $k > \ell$, we deduce $k-\ell =1$.

Clearly, $s'-s=(t,2)$ for some $t \in \integer$. If $t \ge 0$, then $s+w_2$ belongs to the convex hull of $(s+T) \cup (s'+T)$, a contradiction to the fact that $s+T_2$ is the intersection of $K$ with $\integer \times \{h\}$. If $t < -1$, then $s+(-1,1)$ belongs to the convex hull of $(s+T) \cup (s'+T)$, a contradiction the fact that $s+T_2$ is the intersection of $K$ with $\integer \times \{h\}$. 
\end{proof}

\begin{lemma} \label{lem:segment:of:integers}
Let $S \subseteq \integer^2$ be nonempty and such that the sum of $S$ and $T$ is direct and $K:= S \oplus T$ is lattice-convex.  Then $\setcond{h \in \integer}{I_h \ne \emptyset}$ is an interval of of integers, i.e., a set of the form $\setcond{i\in\integer}{h_1\leq i\leq h_2}$ for some $h_1,h_2\in\integer\cup\{\pm \infty\}$ satisfying $h_1 \le h_2$.
\end{lemma}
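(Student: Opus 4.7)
The plan is to exploit the fact that each horizontal row $s+T_1$ inside $K$ has length $k\geq 1$: interpolating convexly between two such rows at different heights produces a parallelogram all of whose integer-height slices contain at least one integer point. Since $K$ is lattice-convex, one has $K=\overline{K}\cap\integer^2$, so integer points of $\overline{K}$ automatically lie in $K$. I intend to reduce the claim to showing that whenever $h_1<h_2$ are integers with $I_{h_1}\neq\emptyset$ and $I_{h_2}\neq\emptyset$, every integer $h$ with $h_1\leq h\leq h_2$ satisfies $I_h\neq\emptyset$; the reverse inclusion $\{h:I_h\neq\emptyset\}\subseteq\integer$ is trivial.

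To set up, I would use the disjoint decomposition $K=\bigsqcup_{s\in S}(s+T)$: since $I_{h_1}\neq\emptyset$, some $s\in S$ has $s_2\in\{h_1-1,h_1\}$, and similarly some $s'\in S$ has $s'_2\in\{h_2-1,h_2\}$. This guarantees $[h_1,h_2]\subseteq[s_2,s'_2+1]$, so it suffices to hit every integer height in this enlarged interval.

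The key step is to consider the parallelogram $P$ spanned by the two horizontal lattice segments $s+T_1$ (at height $s_2$) and $s'+T_1$ (at height $s'_2$). Both lie in $K\subseteq\overline{K}$, so $P\subseteq\overline{K}$ by convexity. The horizontal slice of $P$ at every integer height $h$ with $\min(s_2,s'_2)\leq h\leq\max(s_2,s'_2)$ is a segment of length exactly $k$, and since $k\geq 1$ this segment must contain at least one integer point. Lattice-convexity then places this point in $I_h$. The only integer heights in $[s_2,s'_2+1]$ not yet accounted for are the ``top rows'' $s_2+1$ and $s'_2+1$, and these are covered directly by $s+T_2\subseteq I_{s_2+1}$ and $s'+T_2\subseteq I_{s'_2+1}$.

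I do not anticipate any substantive obstacle: everything rests on the elementary inequality $k\geq 1$, which is built into the hypothesis $k>\ell\geq 0$. The argument is insensitive to whether $S$ is finite or infinite, so it handles the possibilities $h_1=-\infty$ or $h_2=+\infty$ without modification.
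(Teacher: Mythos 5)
Your proof is correct and follows essentially the same route as the paper's: both reduce to showing that consecutive heights between two nonempty sections are nonempty, form the parallelogram spanned by two horizontal translates of $T_1$, and use that each integer-height slice of that parallelogram has length $k\ge 1$ and hence contains a lattice point which lattice-convexity forces into $K$. The only cosmetic difference is that the paper normalizes $h_1,h_2$ downward so that $\cT(h_i)$ contains a $\cT_1$-element, whereas you handle the leftover top height via $s'+T_2$ directly.
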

\begin{proof} 
We consider arbitrary $h_1, h_2 \in \integer$ satisfying $I_{h_1} \ne \emptyset \ne I_{h_2}$. It suffices to show that $I_h \ne 0$ for every $h$ with $h_1 \le h \le h_2$. We may assume without loss of generality that $\cT(h_i)$ contains an element $J_i$ of $\cT_1$ for each $i\in\{1,2\}$; otherwise we decrease $h_1$ or $h_2$ or both by $1$. If $h_2\leq h_1$ we are done. Otherwise we consider the parallelogram $P:=\overline{J_1 \cup J_2}$, see Figure~\ref{fig:pairs}(b) for an illustration. For each $h \in \integer$ with $h_1 \le h \le h_2$ the set $(\real \times \{h\}) \cap P$ is a section of length $k$. Thus, $(\integer \times \{h\}) \cap P$ contains a least $k>0$ integer points. Since in view of convexity we have $(\integer \times \{h\}) \cap P \subseteq I_h$, we see that $I_h \ne \emptyset$.
\end{proof}

\begin{lemma} \label{lem:G_S-is-connected}
Let $S \subseteq \integer^2$ be nonempty and such that the sum of $S$ and $T$ is direct and lattice-convex. Then the graph $G_S$ is connected.
\end{lemma}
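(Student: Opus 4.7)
The plan is to decompose $K = S \oplus T$ into horizontal sections and use the block structure of each $\cT(h)$ to build explicit paths in $G_S$. For each height $h$ with $I_h \neq \emptyset$, let $A_h$ denote the sequence of anchors $s \in S$ of the blocks in $\cT(h)$, listed in their left-to-right order. Since each $s \in S$ is the anchor of $s + T_1 \in \cT(y(s))$, where $y(s)$ is the second coordinate of $s$, every element of $S$ lies in some $A_h$, and it will suffice to show that the union of these sequences forms a connected subgraph of $G_S$.

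The first step is to verify that $A_h$ already traces out a path in $G_S$. By Lemma~\ref{lem:alternate}, consecutive blocks in $\cT(h)$ strictly alternate between $\cT_1$ and $\cT_2$, and a direct endpoint comparison shows that a $\cT_1$-block followed by a $\cT_2$-block produces an anchor shift of $-w_1 = (k+1, -1)$, while the reverse transition produces a shift of $w_2 = (\ell+1, 1)$. Both differences are by definition edges of $G_S$, so $A_h$ is a path in $G_S$.

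The second step is to link $A_h$ to $A_{h+1}$ whenever both are nonempty. The key observation is that any $s \in S$ with $y(s) = h$ appears simultaneously in $A_h$ (as anchor of $s + T_1 \in \cT(h)$) and in $A_{h+1}$ (as anchor of $s + T_2 \in \cT(h+1)$). Consequently $A_h \cap A_{h+1}$ is nonempty as soon as $\cT(h)$ contains a $\cT_1$-block or $\cT(h+1)$ contains a $\cT_2$-block. The only remaining case, in which $\cT(h)$ consists entirely of $\cT_2$-blocks and $\cT(h+1)$ entirely of $\cT_1$-blocks, forces by alternation that each of these sets is a singleton; this is precisely the scenario covered by Lemma~\ref{lem:consecutive-short-sections}, which yields $k - \ell = 1$ together with the fact that the two anchors differ by $w_1 + w_2$. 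In the case $k - \ell = 1$ this is by definition an edge of $G_S$.

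Finally, Lemma~\ref{lem:segment:of:integers} ensures that the set of heights with $I_h \neq \emptyset$ is an interval of integers, so iterating the linking argument produces a single connected subgraph of $G_S$ containing every $A_h$, and hence all of $S$. The main obstacle in this plan is the singleton-singleton case between consecutive horizontal sections: it is exactly the configuration that motivates enlarging the edge set of $G_S$ to include $\pm(w_1 + w_2)$ when $k - \ell = 1$, and Lemma~\ref{lem:consecutive-short-sections} is precisely what shrinks this potentially awkward case to the addition of a single extra edge.
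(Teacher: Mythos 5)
Your proposal is correct and follows essentially the same route as the paper: within-section adjacency via Lemma~\ref{lem:alternate} (yielding the edges $\pm w_1$, $\pm w_2$), the singleton--singleton case between consecutive sections handled by Lemma~\ref{lem:consecutive-short-sections} (yielding the extra edge $w_1+w_2$ when $k-\ell=1$), and chaining across heights via Lemma~\ref{lem:segment:of:integers}. The only difference is presentational -- you build the global path structure $\bigcup_h A_h$ directly, whereas the paper connects two arbitrary points of $S$ by a case analysis on their heights -- but the ingredients and computations are identical.
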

\begin{proof} If $\card{S}=1$ we are done. So let $s, s' \in S$ be distinct. We show that they are connected by a path in $G_S$. Choose $i, i' \in \{1,2\}$ and $h, h' \in \integer$ such that $s+T_i \in I_h$ and $s'+T_{i'} \in I_{h'}$. Without loss of generality let $h' \ge h$.

 \emph{Case 1: $h=h'$}. If $s+T_i$ and $s'+T_{i'}$ (in this order) are consecutive in $\cT(h)$, then for $(i,i')=(1,2)$ one has $s-s'=w_1$ and for $(i,i')=(2,1)$ one has $s'-s=w_2$. See Figure~\ref{fig:pairs}(c) for an illustration of the two cases. If $s+T_i$ and $s'+T_{i'}$ are not consecutive, then using Lemma~\ref{lem:alternate} and the previous observation for the consecutive elements of $\cT(h)$ lying between $s+T_i$ and $s'+T_{i'}$ we see that $s$ and $s'$ are connected in $S$.

 \emph{Case 2: $h'=h+1$}. If there exists $s'' \in S$ such that $s''+T_1 \in I_h$ and $s''+T_2 \in I_{h+1}$, then using Case 1 we see that $s$ and $s''$ are connected in $S$ and $s''$ and $s'$ are connected in $S$. But then also $s$ and $s'$ are connected in $S$. Otherwise, $\cT(h) \cap \cT_1 = \emptyset$ and $\cT(h+1) \cap \cT_2 = \emptyset$. In view of Lemma~\ref{lem:alternate}, we deduce that $\cT(h+1)$ consists of precisely one element of $\cT_1$ and $\cT(h)$ consists of precisely one element of $\cT_2$. Then, by Lemma~\ref{lem:consecutive-short-sections}, we see that $k-\ell=1$ and $s'-s=w_1+w_2$; so $s$ and $s'$ are connected in $S$.

 \emph{Case 3: $h'> h+1$}. By Lemma~\ref{lem:segment:of:integers}, $I_{h''} \ne \emptyset$ for each $h''$ that satisfies $h \le h'' \le h'$. Thus, we can employ the conclusion of Case 2 for those consecutive sections which are above $I_h$ and below $I_{h'}$.
\end{proof}

\begin{figure}
 \centering
 \tikzhide{\tikzPairs}%
 \pdfhide{\includegraphics{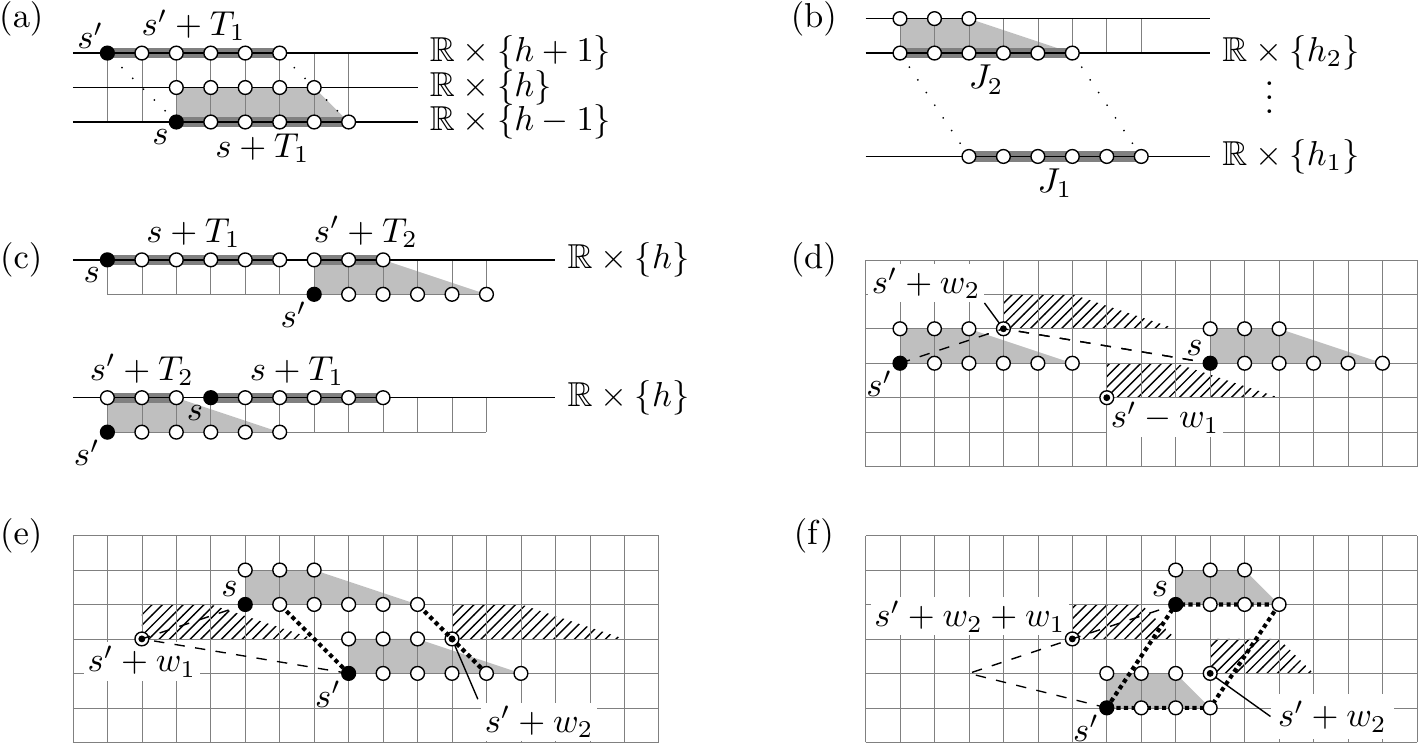}}
 \parbox[t]{0.9\textwidth}{%
 \caption{(a) Illustration of the proof of Lemma~\ref{lem:consecutive-short-sections}. (b) Illustration of the proof of Lemma~\ref{lem:segment:of:integers}. (c) Illustration of the proof of Lemma~\ref{lem:G_S-is-connected}, Case 1. The subcases $(i,i')=(1,2)$ and $(i,i')=(2,1)$ are depicted at the top and bottom, respectively. (d), (e), (f) Illustration of the proof of Lemma~\ref{lem:diagonal=>anotherdiagonal}~\ref{item:left-right=>down-up}, Lemma~\ref{lem:diagonal=>anotherdiagonal}~\ref{item:20.08.10,11:08}, Lemma~\ref{lem:diagonal=>anotherdiagonal}~\ref{item:20.08.10,11:09}, respectively.}
 \label{fig:pairs}}
\end{figure}%

\begin{lemma} \label{lem:diagonal=>anotherdiagonal}
Let $S \subseteq \integer^2$ be nonempty and such that the sum of $S$ and $T$ is lattice-convex. Let $s, s' \in S$. Then the following holds.
 \begin{enumerate}[I.]
 \item \label{item:left-right=>down-up} If $s-s' = w_2-w_1$, then $s'-w_1,s'+w_2 \in S$.
 \item \label{item:20.08.10,11:08} If $k-\ell>1$ and $s-s' = w_1+w_2$, then $s'+w_1, s'+w_2 \in S$ .
 \item \label{item:20.08.10,11:09} If $k - \ell = 1$ and $s-s'=w_1+ 2 w_2$, then $s'+w_2, s'+w_1+w_2 \in S$.
 \item \label{item:20.08.10,11:10} If $k - \ell=1$ and $s-s'= 2 w_1 + w_2$, then $s'+w_1, s'+w_1+w_2 \in S$.
 \end{enumerate}
\end{lemma}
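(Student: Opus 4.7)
All four parts share the same template: combine lattice-convexity of $K$ (which makes the left-boundary function $x_L(y) := \min\setcond{x}{(x,y)\in\overline K}$ convex and the right-boundary function $x_R(y) := \max\setcond{x}{(x,y)\in\overline K}$ concave) with the alternation of $T_1$- and $T_2$-pieces in each horizontal section (Lemma~\ref{lem:alternate}) and the fact that consecutive pieces of $\cT(h)$ share endpoints (since $I_h$ is an interval of integers). In every step, assuming that a desired $S$-element is absent will, via alternation plus positional matching, force some known $\cT(h)$-piece to be the extremum of its row; this pins $x_L(h)$ or $x_R(h)$ down tightly enough to contradict convexity/concavity inequalities fed by the rows where $s+T$ and $s'+T$ live.

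Part~\ref{item:left-right=>down-up} is entirely local to rows $s'_y$ and $s'_y+1$: the gap between $s'+T_1$ and $s+T_1$ in row $s'_y$ is an interval of exactly $\ell+1$ integers contained in $K$, and Lemma~\ref{lem:alternate} forces it to be a single $T_2$-piece, which by position is $(s'-w_1)+T_2$; row $s'_y+1$ produces $(s'+w_2)+T_1$ analogously. For Part~\ref{item:20.08.10,11:08}, suppose $s'+w_2\notin S$; then Lemma~\ref{lem:alternate} forces $x_R(s'_y+1)=s'_x+\ell$, and concavity at the triple $(s'_y,s'_y+1,s'_y+2)$ yields $s'_x+\ell\ge s'_x+(k+\ell)/2$, i.e., $k\le\ell$, contradicting $k>\ell+1$. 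The inclusion $s'+w_1\in S$ is handled symmetrically via $x_L$.

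For Parts~\ref{item:20.08.10,11:09} and~\ref{item:20.08.10,11:10} (with $k=\ell+1$), the rows of $s+T$ and $s'+T$ are separated by an extra row $s'_y+2$, so the contradiction now requires two applications of concavity/convexity. I will prove Part~\ref{item:20.08.10,11:09} in two steps and deduce Part~\ref{item:20.08.10,11:10} by reflection $x\mapsto-x$. \emph{Step 1} ($s'+w_2\in S$): if not, $x_R(s'_y+1)=s'_x+\ell$ as before; concavity at $(s'_y,s'_y+1,s'_y+2)$ gives $x_R(s'_y+2)\le 2 x_R(s'_y+1)-x_R(s'_y)\le s'_x+\ell-1$, while concavity at $(s'_y+1,s'_y+2,s'_y+3)$ gives $x_R(s'_y+2)\ge s'_x+\ell+(\ell+1)/2$, a contradiction. \emph{Step 2} ($s'+w_1+w_2\in S$): Step~1 places $(s'+w_2)+T_2$ in $\cT(s'_y+2)$, and if $s'+w_1+w_2\notin S$ then alternation plus positional matching force $(s'+w_2)+T_2$ to be the leftmost member of $\cT(s'_y+2)$; hence $(s'_x+\ell,s'_y+2)\notin K$, which yields the strict bound $x_L(s'_y+2)>s'_x+\ell$. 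Combined with the convexity bound $x_L(s'_y+2)\le\bigl(x_L(s'_y+1)+x_L(s'_y+3)\bigr)/2\le s'_x+\ell/2$, this is contradictory for every $\ell\ge 0$.

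The main obstacle is Step~2: one has to first bootstrap with the $S$-element produced in Step~1, and the \emph{strict} inequality on $x_L$ (obtained by upgrading a missing lattice point to strict exclusion from $\overline K$ via lattice-convexity) is what makes the contradiction go through uniformly in $\ell$, including the degenerate case $\ell=0$, where the convexity bound $x_L(s'_y+2)\le s'_x+\ell/2$ only matches, rather than beats, the naive lower bound $s'_x+\ell$.
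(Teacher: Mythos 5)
Your overall strategy is sound and is a legitimate alternative to the paper's proof: where the paper certifies that specific lattice points lie in $\overline{K}$ by exhibiting them as midpoints of known points of $K$ or as points of an explicit parallelogram, you extract the same information from convexity/concavity of the boundary functions $x_L,x_R$; and the mechanism ``alternation plus adjacency pins down the unique candidate neighbouring piece, so its absence makes the known piece extremal in its row'' is exactly the right discrete input (it is also what the paper uses). Part~\ref{item:left-right=>down-up} coincides with the paper's argument.

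Two points need repair, though. First, the asserted equality $x_R(s'_y+1)=s'_x+\ell$ is not justified: with your definition $x_R(y)=\max\setcond{x}{(x,y)\in\overline K}$ the function is real-valued, and alternation only tells you that the last \emph{lattice} point of that row is $s'_x+\ell$, i.e.\ $s'_x+\ell\le x_R(s'_y+1)<s'_x+\ell+1$, since $\overline K$ may protrude beyond the last lattice point of a row. In Part~\ref{item:20.08.10,11:08} the extra unit of slack is harmless, but in Step~1 of Part~\ref{item:20.08.10,11:09} it is not harmless if the upper and lower bounds on $x_R(s'_y+1)$ are used independently: one then only gets $x_R(s'_y+2)<s'_x+\ell+1$ against $x_R(s'_y+2)\ge s'_x+(3\ell+1)/2$, which is consistent when $\ell=0$. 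The step can be saved by keeping $x_R(s'_y+1)$ as a single unknown in both concavity inequalities; eliminating $x_R(s'_y+2)$ yields $x_R(s'_y+1)-s'_x\ge\tfrac23(\ell+1)+\tfrac13(2\ell+1)=(4\ell+3)/3\ge\ell+1$, contradicting $x_R(s'_y+1)-s'_x<\ell+1$ for every $\ell\ge0$. Second, Part~\ref{item:20.08.10,11:10} does \emph{not} follow from Part~\ref{item:20.08.10,11:09} via the reflection $x\mapsto-x$: that map sends $T$ to a translate of $\{0,\ldots,k\}\times\{0\}\cup\{1,\ldots,\ell+1\}\times\{1\}$, which is not a translate of $T$, so the hypotheses of the lemma are not preserved (concretely, it sends $s-s'=2w_1+w_2=(-\ell-3,3)$ to $(\ell+3,3)\ne(\ell,3)=w_1+2w_2$). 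Use instead the unimodular map $(x,y)\mapsto(-x-y,y)$, which swaps $w_1$ and $w_2$ and, precisely because $k=\ell+1$, maps $T$ to a translate of itself; alternatively, rerun your Steps~1 and~2 with the roles of $x_L$ and $x_R$ exchanged.
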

\begin{proof} For an illustration of Parts~\ref{item:left-right=>down-up},~\ref{item:20.08.10,11:08}, and~\ref{item:20.08.10,11:09} and the according proof we refer to Figure~\ref{fig:pairs}(d), \ref{fig:pairs}(e), and \ref{fig:pairs}(f), respectively.

 \emph{Part~\ref{item:left-right=>down-up}}. After possibly translating $S$ we assume without loss of generality that $s, s' \in \real \times \{0\}$. Then $s+T_1$ and $s'+T_1$ are two elements of $\cT(0)$, and, in view of Lemma~\ref{lem:alternate}, there exists $s''+T_2$ with $s'' \in S$, lying between $s+T_1$ and $s'+T_2$. Since between $s+T_1$ and $s'+T_1$ there is space for only one copy of $T_2$, $s''+T_2$ and by this $s''$ is determined uniquely. It follows that $s''=s'-w_1$. The proof that $s'+w_2$ uses analogous arguments for sets in $\cT(1)$.

 \emph{Part~\ref{item:20.08.10,11:08}.} Without loss of generality let $s=o$, so $s'=(k-\ell,-2)$. Let us show that $(k-\ell-1,-1) \in S \oplus T$. We have $(k-\ell-1,-1) = \frac{1}{2} (k- \ell -2,0 ) + \frac{1}{2} s'$, where $(k-\ell-2,0) \in s + T_1 \subseteq s +T$ and $s'\in s' + T$. Thus, by the lattice-convexity of $S \oplus T$, we have $(k-\ell-1,-1) \in S\oplus T$. The lattice point $(k-\ell-1,-1)$ strictly precedes $s' + T_2$ in $I_{-1}$. Consequently, in view of Lemma~\ref{lem:alternate}, we see that $s-w_2=s'+w_1 \in S$.

 We show that $(k+1,-1) \in S \oplus T$. We have $(k+1,-1) = \frac{1}{2} (k,0) + \frac{1}{2} (k+2,-2)$, where $(k,0) \in s+T_1 \subseteq s+T$ and $(k+2,-2) \in s' + T_1$ (since $k > \ell +1$). Thus, by lattice-convexity of $S \oplus T$, $(k+1,-1) \in S \oplus T$. The point $(k+1,-1)$ directly follows $s' + T_2$ in $I_{-1}$. Thus, in view of Lemma~\ref{lem:alternate}, we see that $s-w_1=s'+w_2 \in S$.

 \emph{Part~\ref{item:20.08.10,11:09}.} Without loss of generality let $s'=o$, so $s=(k-1,3)$. Let $P$ be the convex hull of ${(s+T_1) \cup (s'+T_1)}$. The point $s'':=w_2=(k,1)=s'+w_2$ lies in $P$, and by this, belongs to $S \oplus T$. Since the point $s''$ strictly follows $s'+T_2$, in view of Lemma~\ref{lem:alternate} applied for elements of $\cT(1)$, we deduce $s''+T_2 \in \cT(1)$. Now, analogously, we use the point $(k-1,2)$, which strictly precedes $s''+T_2$ and lies in $P$ to show that $s'+w_1+w_2$ belongs to $S$. 

 \emph{Part~\ref{item:20.08.10,11:10}.} The proof is analogous to the proof of Part~\ref{item:20.08.10,11:09}.
\end{proof}

The assertions of Lemma~\ref{lem:diagonal=>anotherdiagonal} together with the connectedness of $G_S$ presented in Lemma~\ref{lem:G_S-is-connected} impose strong restrictions on $S$. This is the topic of Lemma~\ref{lem:char-grid-graph} and Lemma~\ref{lem:char-quasi-grid}.

\begin{lemma} \label{lem:char-grid-graph}
 Let $k - \ell > 1$ (and so $G_\bL$ is an infinite grid graph). Let $S \subseteq \bL$ be such that $G_S$ is connected and for all $s, s' \in S$ one has:
 \begin{alignat}{3} 
 s-s' & = w_2 - w_1 & & \quad\Rightarrow\quad & s'-w_1, s'+w_2
 \in S, \label{eq:2010.08.23,12:56} \\
 s-s' & = w_2 + w_1 & & \quad\Rightarrow\quad & s'+w_1, s'+w_2 \in
 S. \label{eq:2010.08.23,12:57}
 \end{alignat}
 Then $G_S$ is a grid graph, i.e.,
\begin{equation*}
 S = \setcond{i w_1 + j w_2}{i,j \in \integer
 \,\land\,
 \alpha_1 \le i \le \alpha_2
 \,\land\,
 \beta_1 \le j \le \beta_2}
\end{equation*}
 for some $\alpha_1, \alpha_2, \beta_1, \beta_2 \in \integer \cup \{-\infty,\infty\}$.
\end{lemma}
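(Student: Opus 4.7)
The plan is to identify $\bL$ with $\integer^2$ via the coordinates $(i,j) \leftrightarrow iw_1 + jw_2$, so that $G_\bL$ becomes the standard integer grid graph and $G_S$ its induced subgraph on $S$. Under this identification, hypotheses~\eqref{eq:2010.08.23,12:56} and~\eqref{eq:2010.08.23,12:57} combine into a single ``$3$-to-$4$ rule'': in every unit lattice square, three corners in $S$ force the fourth (equivalently, either diagonal lying in $S$ forces all four corners). Since every edge of $G_S$ is a unit step, connectivity of $G_S$ immediately forces both projections $A := \{i : \exists j,\,(i,j) \in S\}$ and $B := \{j : \exists i,\,(i,j) \in S\}$ to be intervals of $\integer$, say $A = [\alpha_1,\alpha_2]$ and $B = [\beta_1,\beta_2]$. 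It remains to show $S = A \times B$.

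Assume for contradiction that $X := (A \times B) \setminus S \ne \emptyset$, set $R_j := \{i : (i,j) \in S\}$, and let $J_B := \{j \in B : R_j \ne A\}$. The key technical step is that $J_B$ is downward-closed in $B$: if $j \in J_B$ and $j > \beta_1$, then $j - 1 \in J_B$. To prove this, assume $R_{j-1} = A$ (row below is full) and pick $(i^*, j) \in X$ with $i^*$ minimal. If $i^* > \alpha_1$, then $(i^*-1, j) \in S$ by minimality and $(i^*-1, j-1), (i^*, j-1) \in S$ because row $j-1$ is full; these three corners of the unit square at $(i^*,j)$ force $(i^*, j) \in S$ by the $3$-to-$4$ rule, contradicting $(i^*,j) \in X$. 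If $i^* = \alpha_1$, let $i'$ be the smallest integer greater than $\alpha_1$ with $(i', j) \in S$ (which exists since $j \in B$); then the three corners $(i'-1, j-1), (i', j-1), (i', j)$ of the square at $(i'-1, j)$ lie in $S$, forcing $(i'-1, j) \in S$ and contradicting the minimality of $i'$. A symmetric argument using the row $j+1$ shows $J_B$ is upward-closed as well, so $J_B = B$: every row of $S$ has a gap.

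The final contradiction uses connectivity to pick $i_0$ with $(i_0, \beta_1), (i_0, \beta_1+1) \in S$ (such $i_0$ exists whenever $\beta_2 > \beta_1$, as any path in $G_S$ between the two nonempty rows must cross by a vertical edge; the singleton case $B = \{\beta_1\}$ is immediate, since $S$ is then a single connected row equal to $A \times B$). Iterating the $3$-to-$4$ rule on the unit squares between rows $\beta_1$ and $\beta_1+1$, one shows that going rightward from $i_0$, at each step $i \to i+1$ either both $(i+1, \beta_1), (i+1, \beta_1+1)$ lie in $S$ or neither does. In the base case $|B| = 2$ the ``neither'' alternative forces $i+1 \notin A$, a direct contradiction once $i+1 \in [\alpha_1,\alpha_2]$, so ``both'' propagates throughout $A$ and forces $R_{\beta_1} = R_{\beta_1+1} = A$, contradicting $J_B = B$. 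For $|B| \ge 3$, I would induct on $|B|$: at the first rightward exit $i^++1$, the fact that $i^++1 \in A$ produces some $(i^++1, j^*) \in S$ with $j^* \ge \beta_1+2$, and an iterated application of the $3$-to-$4$ rule along column $i^+$ (descending from a suitable companion point $(i^+, j^*-1)$) allows one to reduce to the two-row base case.

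The main obstacle is the inductive descent in the last paragraph: while the downward- and upward-closure of $J_B$ constrain the ``shape'' of $X$ dramatically, carefully routing the $3$-to-$4$ applications in column $i^+$ to reach rows $\beta_1, \beta_1+1$ from the higher-row witness $(i^++1, j^*)$ is essentially a bookkeeping task, and in an algorithmic write-up one would separate it into a small lemma about ``monotone filling'' of bounding boxes under the $3$-to-$4$ rule.
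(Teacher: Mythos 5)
Your first two steps are sound: connectivity forces both coordinate projections of $S$ (in the basis $w_1,w_2$) to be intervals, and your argument that $J_B$ is both downward- and upward-closed, so that every row of $S$ misses some element of $A$, is correct and only needs the three-corner rule. The genuine gap is in the final step. The contradiction is actually derived only when $S$ occupies at most two rows; for $|B|\ge 3$ you give a plan rather than a proof, and the plan does not go through as stated: the ``suitable companion point'' $(i^+,j^*-1)$ from which you want to descend along column $i^+$ is not known to lie in $S$. At that stage you only know that $(i^++1,j^*)\in S$ for some $j^*\ge \beta_1+2$ while $(i^++1,\beta_1)$ and $(i^++1,\beta_1+1)$ are not in $S$, so no unit square has three known corners, the $3$-to-$4$ rule cannot fire, and the induction on $|B|$ never gets started. (A smaller issue: your extremal choices such as ``$i^*$ minimal'' silently assume finiteness; the lemma allows $\alpha_i,\beta_i=\pm\infty$, though the paper also writes out only the finite case.)

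The root of the difficulty is that you replaced the hypotheses by the strictly weaker $3$-to-$4$ rule; your parenthetical ``equivalently'' is false, since the two-point diagonal rule implies the three-corner rule but not conversely. Conditions \eqref{eq:2010.08.23,12:56} and \eqref{eq:2010.08.23,12:57} say that \emph{two} points of $S$ forming either diagonal of a unit cell already force the remaining two corners into $S$, and this two-point version is precisely what lets a single exposed point propagate along a whole row. The paper's proof exploits this: take an inclusion-maximal box $G'$ inside $G_S$; if $G'\ne G_S$, connectivity yields a point $s'\in S\setminus V(G')$ adjacent to the box, say $s'=iw_1-w_2$ directly below $V(G')=\{0,\dots,\alpha\}w_1+\{0,\dots,\beta\}w_2$; then $s'$ together with $(i+1)w_1$ forms a $w_1+w_2$ diagonal and $s'$ together with $(i-1)w_1$ forms a $w_2-w_1$ diagonal, so \eqref{eq:2010.08.23,12:57} and \eqref{eq:2010.08.23,12:56} sweep the entire row $\{0,\dots,\alpha\}w_1-w_2$ into $S$, contradicting maximality --- with no case analysis on the number of rows. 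I recommend either adopting that maximality argument or at least retaining the two-point diagonal rule, without which it is not even clear that the lemma remains true.
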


\begin{proof}
 In what follows we are only interested in the case that $S$ is finite. We give a proof for this, while the proof for the general situation is essentially the same. The following arguments are illustrated in Figure~\ref{fig:ShapeOfS}(a).
Let $G'$ be an inclusion-maximal grid graph contained in $G_S$. If $G' \ne G_S$, in view of connectedness of $G_S$, we can find two nodes $s,s' \in \integer^2$ adjacent in $G_S$ such that $s \in V(G')$ and $s' \in S \setminus V(G')$. Possibly translating or reflecting $S$, we assume that $V(G')= \{0,\ldots,\alpha\} w_1 + \{0,\ldots,\beta\} w_2$ for integers $\alpha, \beta \ge 0$ and $s'=i w_1 - w_2$ for some $i \in \{0,\ldots,\alpha\}$ or $s'=j w_2 - w_1$ for some $j\in \{1,\ldots,\beta\}$. We consider the case $s'=iw_1-w_2$, for the other case is handled analogously. Consecutively using \eqref{eq:2010.08.23,12:56} and \eqref{eq:2010.08.23,12:57}, we get $i' w_1 -w_2 \in V(G')$ for each $i' \in \{0,\ldots,\alpha\}$, a contradiction to the maximality of $G'$.
\end{proof}

\begin{figure}
 \centering \tikzhide{\tikzShapeOfS}%
 \pdfhide{\includegraphics{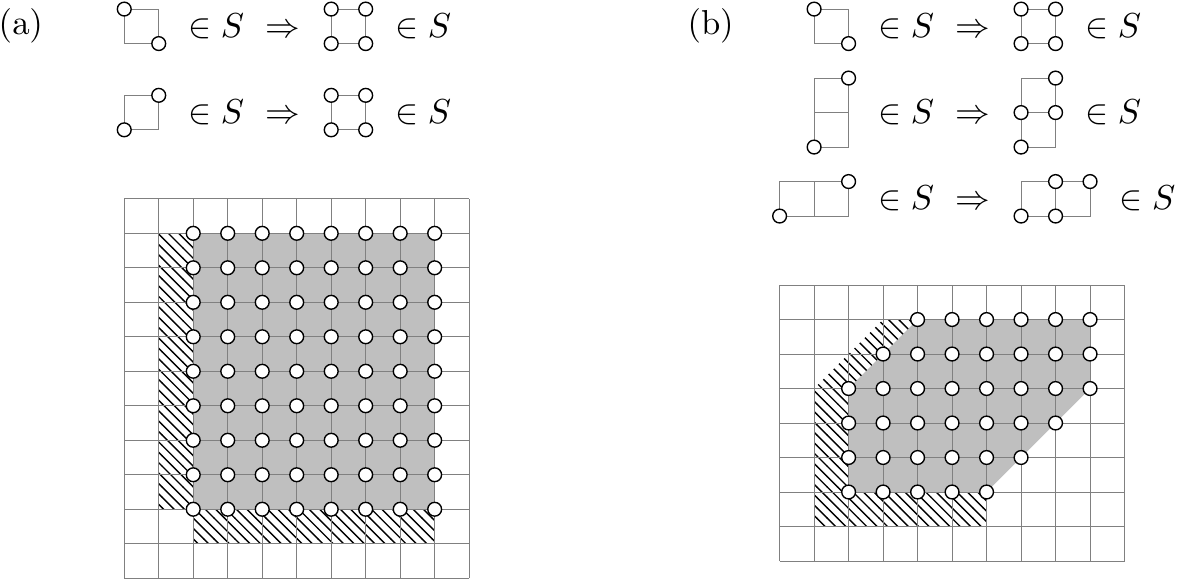}}
 \parbox[t]{0.9\textwidth}{%
   \caption{(a) and (b) illustrate the proofs of Lemma~\ref{lem:char-grid-graph} and  Lemma~\ref{lem:char-quasi-grid}, respectively. Due to affine invariance we exchange $\integer^2$ (and thus $\bL$) by another lattice in which one has $w_1=(1,0)$, $w_2=(0,1)$; this makes the figures invariant under the choice of $k$ and $\ell$ and thus easier to read. At the top of (a) and (b) the implications~\eqref{eq:2010.08.23,12:56},~\eqref{eq:2010.08.23,12:57} and~\eqref{eq:char-quasi-grid:a},~\eqref{eq:char-quasi-grid:b},~\eqref{eq:char-quasi-grid:c} are written in a pictographic style, respectively. At the bottom, the lattice points in the gray area indicate nodes of $G'$; if a lattice point on the boundary of the hatched area was contained in $G_S$, then $G'$ could be extended and would not be maximal. This also holds if $\overline{V(G')}$ is not full-dimensional and/or is unbounded and/or, in case of (b), has less than six edges.}
 \label{fig:ShapeOfS}}
\end{figure}%

\begin{lemma} \label{lem:char-quasi-grid}
 Let $k - \ell =1$. Let $S \subseteq \bL$ be such that $G_S$ is connected and for all $s, s' \in S$ one has:
 \begin{alignat}{3}
 \label{eq:char-quasi-grid:a}
 s-s' & = w_2 -w_1 & \quad\Rightarrow\quad & s'-w_1, s'+w_2 \in S, \\
 \label{eq:char-quasi-grid:b}
 s-s' & =w_1 + 2 w_2 & \quad\Rightarrow\quad & s'+w_2, s'+w_1 + w_2 \in S,\\
 \label{eq:char-quasi-grid:c}
 s-s' & =2 w_1 + w_2 & \quad\Rightarrow\quad & s'+w_1, s'+w_1+w_2 \in S.
 \end{alignat}
 Then
 \begin{equation} \label{special:hexagon} S = \setcond{i w_1 + j w_2}{i,j \in
 \integer \,\land\, \alpha_1 \le i \le \alpha_2 \,\land\, \beta_1 \le j \le
 \beta_2 \,\land\, \gamma_1 \le i-j \le \gamma_2}
 \end{equation}
 for some $\alpha_1,\alpha_2,\beta_1,\beta_2,\gamma_1,\gamma_2 \in \integer \cup \{-\infty,+\infty\}$.
\end{lemma}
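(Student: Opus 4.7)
The plan is to adapt the strategy of the proof of Lemma~\ref{lem:char-grid-graph}. Take an inclusion-maximal subset $V$ of $S$ of the form \eqref{special:hexagon}; such $V$ exists since singletons are hexagonal regions and the union of any chain of hexagonal subsets of $S$ is again hexagonal. Suppose for contradiction that $V \ne S$. By Lemma~\ref{lem:G_S-is-connected} there are adjacent vertices $s \in V$ and $s' \in S \setminus V$, so that $s'-s \in \{\pm w_1, \pm w_2, \pm(w_1+w_2)\}$. The aim is to produce a hexagonal region $V' \supsetneq V$ with $V' \subseteq S$, contradicting maximality.

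Passing to coordinates via $iw_1+jw_2 \mapsto (i,j)$, implications \eqref{eq:char-quasi-grid:a}, \eqref{eq:char-quasi-grid:b}, \eqref{eq:char-quasi-grid:c} become "fill-in" rules: two $S$-points differing by $(-1,1)$, $(1,2)$, or $(2,1)$ respectively force two specific intermediate lattice points to belong to $S$. Express $V$ via its \emph{tight} parameters $\alpha_1 = \min\{i : (i,j) \in V\}$, and so on; tightness forces the inequalities $\beta_1+\gamma_1 \le \alpha_1 \le \beta_2+\gamma_1$, $\beta_1+\gamma_2 \le \alpha_2 \le \beta_2+\gamma_2$ and their three symmetric counterparts, all of which will be used below. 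Let $V'$ be the hexagonal hull of $V \cup \{s'\}$: then $V'$ is obtained from $V$ by relaxing exactly one of the six defining bounds by $1$, and, in corner cases where $s'$ sits at an active diagonal corner of $V$, by also relaxing a second adjacent bound by $1$. The difference $V' \setminus V$ therefore consists of at most two strips of new lattice points along the affected edges.

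Starting from $s' \in S$, every point of $V' \setminus V$ is placed in $S$ by iterated propagation: each step pairs the latest derived $S$-point with a $V$-point two lattice steps away along the old boundary, so that one of the three implications fires and yields the next new point. For instance, when $s' = s - w_2$ with $s = (i_0, \beta_1)$, the generic subcase $i_0 < \beta_1+\gamma_2$ uses implication \eqref{eq:char-quasi-grid:a} paired with $(i_0-1, \beta_1), (i_0-2, \beta_1), \ldots \in V$ to propagate leftward to $(\alpha_1, \beta_1-1)$, and implication \eqref{eq:char-quasi-grid:c} paired with $(i_0+2, \beta_1), (i_0+3, \beta_1), \ldots \in V$ to propagate rightward to $(\beta_1+\gamma_2-1, \beta_1-1)$; the tight inequalities guarantee that these endpoints coincide with the extremes of the new bottom row. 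In the corner subcase $i_0 = \beta_1+\gamma_2$, implication \eqref{eq:char-quasi-grid:b} additionally propagates $S$-membership up the new diagonal $i-j=\gamma_2+1$ using $V$-points on the pre-existing $\gamma_2$-diagonal, with the inequality $\alpha_2 \le \beta_2+\gamma_2$ ensuring coverage up to the top. The other five extension directions reduce to this case via the symmetries $s \leftrightarrow s'$ within each implication, $(i,j) \mapsto (j,i)$ (which swaps \eqref{eq:char-quasi-grid:b} and \eqref{eq:char-quasi-grid:c}), and $(i,j) \mapsto (-i,-j)$. The main obstacle is the corner case, which requires coordinating two simultaneous propagations along different edges of $V'$ and demands an implication of a different "shape"; but the matching between tight parameter inequalities and propagation reach ensures full coverage of $V' \setminus V$.
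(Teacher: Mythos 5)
Your proposal follows essentially the same route as the paper's own (sketched) argument: take an inclusion-maximal sub-hexagon $V$ of $S$, use Lemma~\ref{lem:G_S-is-connected} to find an adjacent point of $S\setminus V$, and iterate \eqref{eq:char-quasi-grid:a}--\eqref{eq:char-quasi-grid:c} to enlarge $V$ and contradict maximality; your use of tight parameters to match the propagation reach with the extent of each newly added facet supplies exactly the detail the paper delegates to Figure~\ref{fig:ShapeOfS}(b). One minor inaccuracy: the symmetries you invoke leave $\pm(w_1+w_2)$ (equivalently, extensions across a diagonal facet $i-j=\gamma_1$ or $\gamma_2$, which can also arise from the very direction $s'=s-w_2$ when $s$ is not on the bottom row) in a separate orbit from the treated case, so these configurations need their own propagation (up via \eqref{eq:char-quasi-grid:b} and down via \eqref{eq:char-quasi-grid:c} along the new diagonal, paired with the old one) --- but that is the same mechanism you already deploy in your corner subcase, so nothing essential is missing.
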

\begin{proof}  The proof of Lemma~\ref{lem:char-quasi-grid} follows along the same lines as the proof of Lemma~\ref{lem:char-grid-graph}, we give a sketch. Again one considers be the inclusion-maximal subgraph $G'$ of $G$ whose node set can be given as in \eqref{special:hexagon}. If $G' \ne G_S$, then by Lemma~\ref{lem:G_S-is-connected} there exist $s \in V(G')$ and $s'\in S \setminus V(G')$ such that $s$ and $s'$ are adjacent in $G_S$. After possibly translating or reflecting $S$, we may assume that $s-s'\in\{w_1,w_2,w_1+w_2\}$. Successively applying~\eqref{eq:char-quasi-grid:a},~\eqref{eq:char-quasi-grid:b}, and~\eqref{eq:char-quasi-grid:c} one concludes that $G'$ was not maximal, a contradiction. Here one has to perform more case distinctions than in the proof of Lemma~\ref{lem:char-grid-graph}. See Figure~\ref{fig:ShapeOfS}(b) for an illustration.
\end{proof}

\begin{proof}[Proof of Theorem~\ref{thm:homometry:from:lat:wid:one}]
 \emph{\eqref{item:direct-and-convex} $\Rightarrow$ \eqref{item:S-is-special}.} If the sum of $S$ and $T$ is direct and $S \oplus T$ is lattice-convex, then Lemma~\ref{lem:G_S-is-connected} implies that $G_S$ is connected. Furthermore, in view Lemma~\ref{lem:diagonal=>anotherdiagonal}, we can apply Lemma~\ref{lem:char-grid-graph} or Lemma~\ref{lem:char-quasi-grid}, depending on whether $k-\ell > 1$ or $k-\ell =1$. This yields the necessity.

 \emph{\eqref{item:S-is-special} $\Rightarrow$ \eqref{item:direct-and-convex}}. Assume that $S$ is given as in \eqref{item:S-is-special}. Since $S \subseteq \bL$ and the sum of $\bL$ and $T$ is direct (see Lemma~\ref{lattice:T:decomposition}), we see that the sum of $S$ and $T$ is direct. It remains to prove that $S \oplus T$ is lattice-convex. For this we show that $\integer^2 \cap \overline{S \oplus T} \subseteq S \oplus T$. Taking into account $\overline{S+T}=\overline{S}+\overline{T}$ and using Lemma~\ref{lattice:T:decomposition} we obtain
 \begin{equation}
 \integer^2 \cap \overline{S \oplus T} = \integer^2 \cap (\overline{S} + \overline{T}) = (\bL + T) \cap (\overline{S} + \overline{T}). \label{eq:2010.08.23,15:18}
 \end{equation}

 \emph{Case~1: $k-\ell=1$.} We represent $\overline{S}$ by
 \begin{equation} \label{eq:convS=} \overline{S} = \setcond{x w_1 + y w_2}{x, y \in
 \real \,\land\, \alpha_1 \le x \le \alpha_2 \,\land\, \beta_1 \le y \le
 \beta_2 \,\land\, \gamma_1 \le x-y \le \gamma_2},
 \end{equation}
 where $\alpha_1,\alpha_2,\beta_1,\beta_2,\gamma_1,\gamma_2 \in \integer$.

 If we represent $p \in \overline{DT}$ as $p = x_p w_1 + y_p w_2$, then it can be verified directly that
 \begin{equation}
 -1 < x_p,y_p,x_p-y_p < 1. \label{eq:2010.08.23,15:21}
 \end{equation}

 In view of \eqref{eq:2010.08.23,15:18}, each element of $\integer^2 \cap \overline{S \oplus T}$ can be given as $s+ t = s'+ t'$ with $s \in \bL$, $t \in T$, $s' \in \overline{S}$, and $t' \in \overline{T}$. Let $s = i w_1 + jw_2$ for $i,j \in \integer$. We show that $s \in S$ arguing by contradiction. Assume that $s \not\in S$. Then, since $S = \bL \cap \overline{S}$, one of the inequalities on the right hand side of \eqref{eq:convS=} is violated for $x=i$ and $y=j$. But then, in view of \eqref{eq:2010.08.23,15:21}, the inequality which is violated for $x=i$ and $y=j$ is also violated for $x=i+x_p$ and $j+y_p$ with $p=t-t'$. This contradicts the fact that $s + t-t' = s' \in \overline{S}$. It follows that $s \in S$. Consequently, $\integer^2 \cap \overline{S \oplus T} \subseteq S + T$.

 \emph{Case~2: $k-\ell>1$.} The proof of this case follows along the same lines. For $x_p, y_p$ as for the previous case we have $-1<x_p,y_p<1$; the remaining arguments are analogous.
\end{proof}

\paragraph{Acknowledgment.} \emph{We thank the anonymous referee for valuable suggestions regarding a previous version of this paper.}


\providecommand{\bysame}{\leavevmode\hbox to3em{\hrulefill}\thinspace}
\providecommand{\MR}{\relax\ifhmode\unskip\space\fi MR }
\providecommand{\MRhref}[2]{%
  \href{http://www.ams.org/mathscinet-getitem?mr=#1}{#2}
}
\providecommand{\href}[2]{#2}

\end{document}